\newtheorem{theorem}{Theorem}[section]
\newtheorem{lemma}[theorem]{Lemma}
\newtheorem{corollary}[theorem]{Corollary}
\newtheorem{proposition}[theorem]{Proposition}
\theoremstyle{definition}
\newtheorem{remark}[theorem]{Remark}
\newtheorem{definition}[theorem]{Definition}
\theoremstyle{remark}
\DeclareMathOperator{\tr}{tr}
\DeclareMathOperator{\Hom}{Hom}
\DeclareMathOperator{\Tr}{Tr}
\DeclareMathOperator{\Nr}{Nr}
\DeclareMathOperator{\Trd}{Trd}
\DeclareMathOperator{\Nrd}{Nrd}
\newcommand{\bZ}{\mathbb{Z}}
\newcommand{\bC}{\mathbb{C}}
\newcommand{\cA}{\mathcal{A}}
\newcommand{\cE}{\mathcal{E}}
\newcommand{\cH}{\mathcal{H}}
\newcommand{\cO}{\mathcal{O}}
\newcommand{\fp}{\mathfrak{p}}
\newcommand{\iGL}{\mathit{GL}}
\begin{document}
\title{Local Jacquet-Langlands correspondences\\ 
for simple supercuspidal representations} 
\author{Naoki Imai and Takahiro Tsushima}
\date{}
\maketitle

\footnotetext{2010 \textit{Mathematics Subject Classification}. 
 Primary: 11F70; Secondary: 11L05.} 
\footnotetext{Key words: Jacquet-Langlands correspondence, character, Gauss sum, Kloosterman sum}

\begin{abstract} 
We give a description of the local Jacquet-Langlands 
correspondence for simple supercuspidal representations 
via type theory. 
As a consequence, we show that the 
endo-classes for such representations are invariant under 
the local Jacquet-Langlands correspondence. 
\end{abstract}
\section*{Introduction}
Let $K$ be a non-archimedean local field 
with residue characteristic $p$. 
Let $A$ be a central simple algebra over $K$. 
We put $n=[A:K]^{1/2}$. 
The local Jacquet-Langlands 
correspondence (LJLC) 
gives a correspondences between irreducible 
essentially square-integrable representations of 
$\iGL_n (K)$ and $A^{\times}$. 
The irreducible supercuspidal representations of 
$\iGL_n (K)$ is classified in \cite{BK} 
via type theory, which describes 
supercuspidal representations as compact inductions 
of representations 
of some open subgroups that are compact modulo center. 
More generally, 
type theory for representations of $A^{\times}$ 
is developed in a series of papers 
\cite{SecGLmD1}, \cite{SecGLmD2}, \cite{SecGLmD3}, 
\cite{SeSt4}, \cite{BSS5} and \cite{SeSt6}. 
So it is natural to seek 
a description of the LJLC via type theory. 

In the case where $A$ is a division algebra, 
such descriptions are studied in \cite{GeFl4}, \cite{HeJLexI}, 
\cite{BHJLexII} and \cite{BHbook} 
if $n$ is a prime number, 
and in \cite{BHtliftIII} if $p$ is odd, $n$ is a power of $p$ and 
representations are totally ramified. 
For general $A$, 
such descriptions are given 
in \cite{SZmatzero} for 
level zero discrete series representations, and 
in \cite{BHetJL} 
for essentially tame representations. 

%A notion of epipelagic representations is introduced in \cite{RYepi}. 
%In the case for $\textit{GL}_n(K)$, the epipelagic representations are 
%the irreducible supercuspidal representations with 
%Artin conductor $n+1$ according to \cite[p.~433]{BHepi}. 
%This class of representations are called 
%simple supercuspidal representations in 
%\cite{ALssrGL}, \cite{KLssr} and \cite{XussrGL}. 
%In this paper, we call such representations 
%simple supercuspidal representations as in \cite{ITepitame}. 
%Actually, the epipelagic representations mean 
%a wider class of representations in \cite{ALssrGL}. 

In this paper, we give a description of the LJLC via type theory 
for the simple supercuspidal representations. 
We define the simple supercuspidal representations of $A^{\times}$ 
in Definition \ref{def:sims} after \cite{RYepi}, 
which are equivalent to 
the supercuspidal representations of conductor $n+1$ as a result. 
Such representations appear in \cite{ITepitame}. 
If $A^{\times} =\iGL_n (K)$, 
simple supercuspidal representations are studied in 
\cite{ALssrGL} (cf.~\cite{KLssr}, \cite{XussrGL}) and 
they are called epipelagic representations in \cite{BHepi}. 
We note that the simple supercuspidal representations of $A^{\times}$ 
are essentially tame if $n$ is prime to $p$, 
but they are not essentially tame if 
$p$ divides $n$. 

As a consequence of the description of the LJLC, 
we show that 
the endo-classes are invariant under the 
LJLC for the simple supercuspidal representations. 
This verifies 
\cite[Conjecture 9.5]{BSS5} by Broussous-S\'{e}cherre-Stevens 
for the simple supercuspidal representations. 
The conjecture is verified in \cite{KaEndoJL} for 
totally ramified representations of 
unit groups of division algebras, 
based on results in \cite{BHtliftIII}. 

In Section \ref{ClassRep}, 
we give a construction and a definition of 
the simple supercuspidal representations 
of $A^{\times}$. Further, we 
show that they are equivalent to 
the supercuspidal representations of conductor $n+1$. 
In Section \ref{character}, 
we give formulas for the characters 
of the simple supercuspidal representations 
at some elements, 
which are elliptic quasi-regular in the sense of 
\cite[1.1 Remark]{BHetJL}. 
The values of the characters at these elements 
are written in terms of variants of 
Gauss sums and generalized Kloosterman sums
(cf.~Proposition \ref{trGa} and Proposition \ref{trKl}). 
We believe that these formulas are interesting in themselves. 
In Section \ref{DesLJLC}, 
we give a description of the LJLC 
via type theory in Theorem \ref{mainthm}. 
We determine the description 
by checking character relations at some elements. 
In Section \ref{An}, we give 
another proof of our main theorem 
using Godement-Jacquet local constants. 
This proof is based on 
a formula in \cite{B} and \cite{BFgauss}, which calculates Godement-Jacquet local constants
with respect to some kind of Gauss sums. 
In Section \ref{Invec}, 
we show the invariance of the endo-classes under the 
LJLC for the simple supercuspidal representations. 

Type theory for 
irreducible supercuspidal representations of 
$A^{\times}$ 
naturally appears in the study of geometric realization of 
the local Langlands correspondence and the LJLC. 
The results in this paper are used in \cite{ITepiwild} 
to show that 
the LJLC is realized in the cohomology of the reductions of 
a family of affinoids in the Lubin-Tate perfectoid space. 

\subsection*{Acknowledgements}
This work was supported by JSPS KAKENHI Grant Numbers 
26707003, 15K17506. 
We thank a referee for suggestions for improvement.

\section*{Notation}
For a non-archimedean local 
field $F$, let $\mathcal{O}_F$ 
denote the ring of integers in $F$. 
For a field $F$ and a positive integer $l$, 
let $\mu_l (F)$ denote the group of $l$-th roots of the unity inside $F$. For an abelian group $X$, 
we write $X^{\vee}$ for its character group 
$\mathrm{Hom}_{\mathbb{Z}}(X,\mathbb{C}^{\times})$. 
For a group $G$, its subgroup $H$, 
a character $\theta$ of $H$ and $g \in G$, 
we put $H^g =g H g^{-1}$ and define a character 
$\theta^g$ of $H^g$ by 
$\theta^g (h) =\theta (g^{-1} h g)$ 
for $h \in H^g$. 

\section{Simple supercuspidal representation}\label{ClassRep}
In this section, we give a construction and a definition of 
the simple supercuspidal representations of 
the multiplicative group of a central simple algebra over 
a non-archimedean local field. 
Further, we give a characterization of 
the simple supercuspidal representations by the conductors. 

Let $K$ be a non-archimedean local field 
with residue field $k$. 
We set $q=|k|$. 
Let $\mathfrak{p}_K$ be the maximal ideal 
of $\mathcal{O}_K$. 
Any central simple algebra over $K$ 
is isomorphic a matrix algebra over 
a central division algebra over $K$. 
Let $m$ be a positive integer, and 
$D$ be a central division algebra over $K$. 
We put $A=M_m(D)$ and $G=A^{\times}$. 

Let $\pi$ be an irreducible smooth representation of 
$G$. 
We fix a non-trivial character $\psi \in k^{\vee}$. 
For $x \in \cO_K$, 
let $\bar{x}$ denote the image of 
$x$ under the reduction map $\cO_K \to k$. 
We take a character 
$\psi_K \in K^{\vee}$ such that 
\begin{align*}
 \psi_K (x) &=\psi (\bar{x}) \quad \textrm{for} \ 
 x \in \mathcal{O}_K, \\ 
 \psi_K (x) &=1  \quad \textrm{for} \ 
 x \in \mathfrak{p}_K. 
\end{align*}
Let 
$\epsilon( \pi, s, \psi_K)$ be the Godement-Jacquet local constant of 
$\pi$ with respect to $\psi_K$. 
Then there exists an integer $f(\pi, \psi_K)$ such that 
\[
 \epsilon(\pi,s,\psi_K) 
 =q^{-f(\pi, \psi_K) s} 
 \epsilon(\pi, 0, \psi_K) 
\]
by \cite[Theorem 3.3(4)]{GJzf}. 
We put $r=[D:K]^{1/2}$ and $n=mr$. 
We define the conductor $c (\pi)$ of $\pi$ by 
\[
 c(\pi)=f(\pi, \psi_K)+n. 
\]
Let 
$\cA_{D,m}^{n+1}$ 
denote the set of the isomorphism classes 
of the supercuspidal representations of $G$ of conductor $n+1$.

We fix a uniformizer $\varpi$ of $K$. 
Let 
$\eta=(\zeta,\chi,c) \in \mu_{q-1}(K) \times 
 (k^{\times})^{\vee} \times \mathbb{C}^{\times}$. 
In the following, 
we define a smooth representation 
$\pi_{D,m,\eta}$ of $G$. 
Let $K_r$ be the unramified 
extension of $K$ of degree $r$. 
We take an element 
$\varphi_{D,\zeta} \in D^{\times}$, 
an embedding $K_r \hookrightarrow D$ and 
an integer $1 \leq s \leq r-1$ 
which is prime to $r$ 
such that 
$\varphi_{D,\zeta}^r=\zeta \varpi$ and 
$\varphi_{D,\zeta} d \varphi_{D,\zeta}^{-1} =d^{q^s}$ 
for 
$d \in \mu_{q^r -1}(K_r)$. 
We put 
\[
 \varphi_{\zeta}=
 \begin{pmatrix}
 \bm{0} & I_{m-1} \\
 \varphi_{D,\zeta} & \bm{0}
 \end{pmatrix} \in A 
 \quad \textrm{and} \quad 
 L_{\zeta}=K(\varphi_{\zeta}) \subset A. 
\] 
Since $\varphi_{\zeta}^n=\zeta \varpi$, the field $L_{\zeta}$ is 
a totally ramified extension over $K$ 
of degree $n$. 
We have 
\begin{equation}\label{Nrd}
\Nrd_{A/K}(\varphi_{\zeta})=(-1)^{n-1} \zeta \varpi, \quad 
\Trd_{A/K}(\varphi_{\zeta}^{-1})=0. 
\end{equation}

Let $\mathcal{O}_D$ denote the maximal 
order of $D$, 
and $\mathfrak{p}_D$ denote the maximal 
ideal of $\mathcal{O}_D$. 
For a positive integer $l$, 
let $k_l$ be the extension of $k$ of degree $l$. 
We identify 
$\mathcal{O}_D / \mathfrak{p}_D$ with 
$k_r$. 
Let $C$ be 
the subring of $M_m(k_r)$
consisting of all upper triangular matrices. 
Let $\mathfrak{A}$ denote 
the inverse image of $C$ 
under the reduction map 
$M_m(\mathcal{O}_D) \to M_m(k_r)$. 
Then $\mathfrak{A}$ is an order in $A$. 
Let $\mathfrak{P}_{\mathfrak{A}}$ 
be the Jacobson radical of $\mathfrak{A}$. 
Note that 
$\mathfrak{P}_{\mathfrak{A}}=\varphi_{\zeta} \mathfrak{A}$ and 
the normalizer of $\mathfrak{A}$ 
in $G$ equals 
$L_{\zeta}^{\times} \mathfrak{A}^{\times}$. 
For a positive integer $i$, we set 
$U_{\mathfrak{A}}^i=1+\mathfrak{P}_{\mathfrak{A}}^i$. 
Let 
$\theta_{D,m,\eta} \colon L_{\zeta}^{\times} U_{\mathfrak{A}}^1 
 \to \mathbb{C}^{\times}$ 
be the character defined by
\begin{gather}\label{ccd}
\begin{aligned}
 \theta_{D,m,\eta} (x) &=\chi (\bar{x}) \quad 
 \textrm{for $x \in \mu_{q-1}(K)$},\\ 
 \theta_{D,m,\eta} (x) & = 
 (\psi_K \circ 
 \Trd_{A/K} )(\varphi_{\zeta}^{-1}(x-1)) \quad 
 \textrm{for $x \in U_{\mathfrak{A}}^1$},\\ 
 \theta_{D,m,\eta} (\varphi_{\zeta})& =
 (-1)^{m-1}c. 
\end{aligned}
\end{gather}
We put 
$\pi_{D,m,\eta} = 
 \mathrm{c\mathchar`-Ind}_{L_{\zeta}^{\times} U_{\mathfrak{A}}^1}^{G} 
 \theta_{D,m,\eta}$. 

\begin{definition}\label{def:sims}
We say that 
an irreducible supercuspidal representation $\pi$ of $G$ 
is simple supercuspidal if 
$\pi \simeq \pi_{D,m,\eta}$ for some 
$\eta \in \mu_{q-1}(K) \times 
 (k^{\times})^{\vee} \times \mathbb{C}^{\times}$. 
\end{definition}

\begin{lemma}\label{piepi}
The representation $\pi_{D,m,\eta}$ 
is a supercuspidal 
representation of conductor $n+1$. 
\end{lemma}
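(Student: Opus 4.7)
My plan is to first check that $\theta_{D,m,\eta}$ is a well-defined character of $J := L_\zeta^\times U_{\mathfrak{A}}^1$, then to apply the Mackey intertwining criterion to conclude that $\pi_{D,m,\eta}$ is irreducible and supercuspidal, and finally to extract the conductor from the simple-stratum data $[\mathfrak{A}, 1, 0, \varphi_\zeta^{-1}]$. For well-definedness, I would write $J$ as generated by $\varphi_\zeta$, $\mu_{q-1}(K)$, and $U_{\mathfrak{A}}^1$, using $L_\zeta^\times = \langle \varphi_\zeta \rangle \times \mu_{q-1}(K) \times (1 + \fp_{L_\zeta})$ together with $1 + \fp_{L_\zeta} \subset U_{\mathfrak{A}}^1$, and then verify: (a) multiplicativity of $\theta_{D,m,\eta}|_{U_{\mathfrak{A}}^1}$, which follows from $(x-1)(y-1) \in \mathfrak{P}_{\mathfrak{A}}^2$ together with the inclusion $\Trd_{A/K}(\mathfrak{P}_{\mathfrak{A}}) \subset \fp_K$ (valid because $e(\mathfrak{A}|\cO_K) = n$), so that the cross-term $\psi_K(\Trd_{A/K}(\varphi_\zeta^{-1}(x-1)(y-1)))$ is trivial; and (b) conjugation-invariance of $\theta_{D,m,\eta}|_{U_{\mathfrak{A}}^1}$ under $\mu_{q-1}(K) \subset K^\times$ (central) and under $\varphi_\zeta$ (which normalizes $\mathfrak{A}$), immediate from conjugation-invariance of $\Trd_{A/K}$.

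For irreducibility, I would apply the Mackey criterion: for a character $\theta$ of a compact-mod-center open subgroup $J \subset G$, the induced representation $\mathrm{c\mathchar`-Ind}_J^G \theta$ is irreducible iff the intertwining set
\[
 I_G(\theta) := \{ g \in G : \theta^g|_{J^g \cap J} = \theta|_{J^g \cap J} \}
\]
equals $J$. The key observation is that $[\mathfrak{A}, 1, 0, \varphi_\zeta^{-1}]$ is a simple stratum in $A$: $L_\zeta = K(\varphi_\zeta^{-1})$ is a maximal subfield of degree $n$ over $K$, and $\varphi_\zeta^{-1}$ has $\mathfrak{A}$-valuation $-1$, which is coprime to $e(\mathfrak{A}|\cO_K) = n$, so $\varphi_\zeta^{-1}$ is minimal. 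The S\'echerre-Stevens intertwining formula for simple characters then yields $I_G(\theta_{D,m,\eta}|_{U_{\mathfrak{A}}^1}) \subset U_{\mathfrak{A}}^1 \cdot B^\times \cdot U_{\mathfrak{A}}^1$, where $B := Z_A(L_\zeta) = L_\zeta$ (the equality because $L_\zeta$ is maximal). This product equals $J$, and since any element intertwining $\theta_{D,m,\eta}$ a fortiori intertwines its restriction to $U_{\mathfrak{A}}^1$, we deduce $I_G(\theta_{D,m,\eta}) \subset J$; the reverse inclusion is immediate. Irreducibility follows, and supercuspidality is automatic because $J$ is compact modulo $Z(G) = K^\times$, so the matrix coefficients of $\pi_{D,m,\eta}$ have support compact modulo the center.

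For the conductor, the stratum invariants $-v_{\mathfrak{A}}(\varphi_\zeta^{-1}) = 1$ and $e(\mathfrak{A}|\cO_K) = n$ place $\pi_{D,m,\eta}$ at the minimal positive normalized depth $1/n$; by the standard type-theoretic formula for the Godement-Jacquet conductor exponent at this depth, $f(\pi_{D,m,\eta}, \psi_K) = 1$, whence $c(\pi_{D,m,\eta}) = 1 + n = n+1$. The hardest step in this plan is the intertwining computation: the required structural results for $I_G(\theta_{D,m,\eta}|_{U_{\mathfrak{A}}^1})$ are classical in the $\iGL_n(K)$ case (Bushnell-Kutzko) but in the general setting $A = M_m(D)$ depend on the more technical S\'echerre-Stevens machinery for simple strata in $\iGL_m(D)$ developed in the series of papers cited in the introduction.
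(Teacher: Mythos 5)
Your plan and the paper's proof rest on the same structural input: the stratum $[\mathfrak{A},1,0,\varphi_\zeta^{-1}]$ is a simple stratum with $\varphi_\zeta^{-1}$ minimal, and $\theta_{D,m,\eta}|_{U_{\mathfrak{A}}^1}$ is the associated simple character. Where the paper packages the pair $\bigl(\mathcal{O}_{L_\zeta}^\times U_{\mathfrak{A}}^1, \theta_{D,m,\eta}|_{\mathcal{O}_{L_\zeta}^\times U_{\mathfrak{A}}^1}\bigr)$ as a maximal simple type and then invokes S\'echerre's Th\'eor\`eme~5.2 of [Se3] wholesale to get supercuspidality, you instead unwind that theorem by hand: you run the Mackey criterion against the S\'echerre--Stevens intertwining bound $I_G(\theta|_{U_{\mathfrak{A}}^1}) = U_{\mathfrak{A}}^1\, L_\zeta^\times\, U_{\mathfrak{A}}^1$, note that this product collapses to $L_\zeta^\times U_{\mathfrak{A}}^1 = J$ because $L_\zeta^\times$ normalizes $\mathfrak{A}$, and deduce irreducibility, with supercuspidality then free from $J$ being open and compact modulo center. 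The two routes are equivalent in substance; yours is a more explicit, self-contained version of the proof of the theorem the paper cites. Two small items you take for granted that the paper proves or cites explicitly: (i) that $\mathfrak{A}$ is in fact a hereditary order (the paper exhibits the lattice chain realizing it), and (ii) that $\theta_{D,m,\eta}|_{U_{\mathfrak{A}}^1}$ is the unique simple character in $\mathscr{C}(\varphi_\zeta^{-1},0,\mathfrak{A})$, which is needed before the intertwining formula applies; the paper records this as equation~\eqref{mato} via [Se1, Lemma~3.23]. Your conductor step appeals to a ``standard type-theoretic formula at depth $1/n$'' which is exactly the paper's citation of [BF, Theorem~3.3.8] via [ABPS, Proposition~2.6]. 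The outline is correct; with these two details filled in it would match the paper's proof step for step.
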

\begin{proof} 
We define a chain lattice 
$\Lambda=\{\Lambda_i\}_{i \in \mathbb{Z}}$
in $D^{\oplus m}$ by 
\[
 \Lambda_{mj+l}=
 (\mathfrak{p}_D^j)^{\oplus m-l} \oplus 
 (\mathfrak{p}_D^{j+1})^{\oplus l} 
\]
for $j \in \mathbb{Z}$ and $0 \leq l \leq m-1$. 
Then we have 
\[
 \mathfrak{A}= 
 \bigl\{ g \in A \bigm| g \Lambda_i \subset \Lambda_i \ \, 
 \textrm{for all} \ \, i \in \mathbb{Z} \bigr\} 
\] 
and 
$\varphi_{\zeta} \Lambda_i \subset \Lambda_{i+1}$ 
for 
$i \in \mathbb{Z}$. 
Hence, $\mathfrak{A}$ is a hereditary order in 
$A$ (cf.~\cite[D\'{e}finition 1.3]{SecGLmD1}). 
We consider a stratum 
$[\mathfrak{A},1,0,\varphi_{\zeta}^{-1}]$ 
of $A$ (cf.~\cite[D\'{e}finition 2.1]{SecGLmD1}).
We set 
\[
 B=\bigl\{x \in A \bigm| xz=zx\ \textrm{for all}\ z \in L_{\zeta} \bigr\}. 
\]
Then we have $B=L_{\zeta}$. 
Using this, we see that 
the critical exponent of this stratum equals 
$-1$ (cf.~\cite[2.1]{SecGLmD1}). 
Hence, this stratum is simple (cf.~\cite[D\'{e}finition 2.3]{SecGLmD1}). 
Since the simple pair 
$[0,\varphi_{\zeta}^{-1}]$ over $K$ 
is minimal in the sense of \cite[2.3.3]{SecGLmD1}, 
we have 
\begin{equation}\label{HJU}
 H^1(\varphi_{\zeta}^{-1},\mathfrak{A})
 =
 J^1(\varphi_{\zeta}^{-1},\mathfrak{A})
 =U_{\mathfrak{A}}^1 \subset 
 J(\varphi_{\zeta}^{-1},\mathfrak{A})=\mathcal{O}_{L_{\zeta}}^{\times} 
 U_{\mathfrak{A}}^1 
\end{equation}
under the notation in 
\cite[(65)]{SecGLmD1}. 
We have 
\begin{equation}\label{mato}
 \mathscr{C}(\varphi_{\zeta}^{-1},0, \mathfrak{A}) 
 =\Bigl\{ 
 \theta_{D,m,\eta}|_{U^1_{\mathfrak{A}}} 
 \Bigr\} 
\end{equation}
by \cite[Lemma 3.23]{SecGLmD1} under the notation in 
\cite[D\'{e}finition 3.45]{SecGLmD1}. 
Then, by \eqref{ccd}, \eqref{HJU} and \eqref{mato}, 
we can check that 
the pair 
\[
 (\mathrm{J},\lambda)= 
 \Bigl( \mathcal{O}_{L_{\zeta}}^{\times}U_{\mathfrak{A}}^1, 
 \theta_{D,m,\eta}|_{\mathcal{O}_{L_{\zeta}}^{\times}
 U_{\mathfrak{A}}^1} \Bigr) 
\] 
is a maximal simple type of level $>0$ for 
$G$ with respect to the 
simple stratum 
$[\mathfrak{A},1,0,\varphi_{\zeta}^{-1}]$ 
in the sense of \cite[4.1 and 5.1]{SecGLmD3}. 
Hence, $\pi_{D,m,\eta}$ is a supercuspidal representation 
by \cite[Th\'{e}or\`{e}me 5.2]{SecGLmD3}. 
We see 
$c(\pi_{D,m,\eta})=n+1$ by using 
\cite[Theorem 3.3.8]{BFgauss} (cf.~the proof of 
\cite[Proposition 2.6]{ABPS}). 
\end{proof}

\begin{proposition}\label{epiclass}
The map 
\[
 \Phi \colon \mu_{q-1}(K) \times 
 (k^{\times})^{\vee} \times \mathbb{C}^{\times} \to 
 \cA_{D,m}^{n+1}; \ 
 \eta \mapsto \pi_{D,m,\eta} 
\]
is a bijection. 
\end{proposition}
\begin{proof}
We show the injectivity. 
We take 
\[
 \eta =(\zeta,\chi,c), \ \eta'=(\zeta',\chi',c') 
 \in \mu_{q-1}(K) \times 
 (k^{\times})^{\vee} \times \mathbb{C}^{\times}
\] 
such that 
$\pi_{D,m,\eta} \simeq \pi_{D,m,\eta'}$. 
By \cite[Corollary 7.3]{SeSt6}, 
we see that 
$\zeta=\zeta'$ and $\chi=\chi'$. 
By $\pi_{D,m,\eta} \simeq \pi_{D,m,\eta'}$, 
there exists $g_0 \in G$ such that 
\begin{equation}\label{conjhom}
 \Hom_{L_{\zeta}^{\times} U_{\mathfrak{A}}^1 \cap 
 (L_{\zeta}^{\times} U_{\mathfrak{A}}^1)^{g_0}}
 (\theta_{D,m,\eta},\theta_{D,m,\eta'}^{g_0}) \neq 0. 
\end{equation}
This implies 
\[
 \Hom_{U_{\mathfrak{A}}^1 \cap 
 (U_{\mathfrak{A}}^1)^{g_0}}
 (\theta_{D,m,\eta},\theta_{D,m,\eta}^{g_0}) \neq 0, 
\]
since 
$\theta_{D,m,\eta}$ coincides with 
$\theta_{D,m,\eta'}$ on $U_{\mathfrak{A}}^1$. 
Then we have 
$g_0 \in L_{\zeta}^{\times} U_{\mathfrak{A}}^1$ 
by \cite[Proposition 2.10]{SecGLmD2}. 
Hence, we have $\eta =\eta'$ by \eqref{conjhom}. 

We show the surjectivity. 
Let $\pi \in \cA_{D,m}^{n+1}$. 
By 
\cite[Th\'{e}or\`{e}me 5.21 and Corollaire  5.22]{SeSt4}, 
we have 
$\pi \simeq 
 \mathrm{c\mathchar`-Ind}_{\bar{J}}^G \theta$ 
for 
a maximal simple type $(J,\lambda)$ 
with a simple stratum 
$[\frak{A}_0,l,0,\beta]$ 
and an extension $\theta$ of $\lambda$ 
to 
\[
 \bar{J}=\{ g \in G \mid J^g =J ,\ \lambda^g =\lambda \}. 
\]
Let $\mathfrak{P}_{\frak{A}_0}$ be the 
Jacobson radical of $\frak{A}_0$. 
By \cite[Proposition 2.6]{ABPS}, 
we have $l=1$ and 
$\mathfrak{p}_K \frak{A}_0 = \mathfrak{P}_{\frak{A}_0}^n$. 
Replacing $\frak{A}_0$ by its conjugate, 
we may assume that $\frak{A}_0 =\frak{A}$ 
(cf.~\cite[(1.5.2) Proposition (ii)]{BFgauss}). 
We put $U_K^1 =1+\mathfrak{p}_K$. 
We take $\zeta \in \mu_{q-1}(K)$ 
such that 
\begin{equation}\label{Nrdmod}
 \Nrd_{A/K} (\beta^{-1}) \equiv (-1)^{n-1} \zeta \varpi 
 \mod U_K^1. 
\end{equation}
We write $\beta^{-1} =\varphi_{\zeta} a u$, where 
$u \in U_{\frak{A}}^1$ and 
$a=(a_{ij})_{1 \leq i,j \leq m} \in \mathfrak{A}^{\times}$ 
is a diagonal matrix 
such that $a_{ii} \in \mu_{q^r -1} (K_r)$ 
for $1 \leq i \leq m$. 
We put $b= \prod_{1 \leq i \leq m} a_{ii}$. 
Then we see that 
$\varphi_{\zeta} a$ is conjugate to 
\[
 \begin{pmatrix}
 \bm{0} & I_{m-1} \\
 \varphi_{D,\zeta} b & \bm{0}
 \end{pmatrix} \in A 
\]
by an element of $\mathfrak{A}^{\times}$. 
Further, this is conjugate to $\varphi_{\zeta}$ 
by an element of $\mathfrak{A}^{\times}$, 
since we have $\Nrd_{D/K} (b)=1$ by \eqref{Nrdmod}. 
Hence, we may assume that 
$\beta =\varphi_{\zeta}^{-1}$. 

We see that $\theta$ is a character 
by the definition of a maximal simple type 
(cf.~\cite[4.1 and 5.1]{SecGLmD3}), 
\cite[2.2]{SecGLmD2} and \eqref{HJU}. 
We define 
$\chi \in (k^{\times})^{\vee}$ by 
$\chi (\bar{x}) =\theta (x)$ for $x \in \mu_{q-1}(K)$, and 
put 
$c=(-1)^{m-1} \theta (\varphi_{\zeta})$. 
Then we have $\pi=\Phi (\eta )$ for 
$\eta =(\zeta,\chi,c)$. 
\end{proof}

\begin{remark}
If $m=n$, 
Proposition \ref{epiclass} follows from 
\cite[2.1 and 2.2]{BHepi}. 
Actually, the proof of the injectivity of 
$\Phi$ is logically unnecessary in this paper: 
We need only the surjectivity of $\Phi$ 
in the case $m=n$ 
for the proof of Theorem \ref{mainthm}, 
and 
the injectivity of $\Phi$ follows from 
Theorem \ref{mainthm} and the 
injectivity of $\Phi$ in the case $m=n$. 
\end{remark}

\section{Formula for character}\label{character}
In this section, 
we give formulas for 
characters of the simple supercuspidal representations of $G$ 
at some elements. 

\begin{definition}[{cf.~\cite[1.1 Remark]{BHetJL}}]\label{qreg}
Let $g \in G$. 
Let $f_g(x) \in K[x]$ be 
the reduced characteristic polynomial of $g$ over $K$. 
\begin{enumerate}
\item 
We say that $g$ is regular if 
$f_g(x)$ is separable. 
\item 
We say that $g$ is quasi-regular if 
$f_g(x)$ has no repeated irreducible factor over $K$. 
\item
We say that $g$ is elliptic if 
the minimal polynomial of $g$ is irreducible over $K$. 
\end{enumerate}
\end{definition}

We write $G_{\mathrm{reg}}$, $G_{\mathrm{qr}}$ 
and $G_{\mathrm{qr}}^{\mathrm{ell}}$ 
for the set of the regular elements of $G$, 
the set of the quasi-regular elements of $G$ 
and the set of the elliptic quasi-regular elements of $G$ 
respectively. 

Let $\cH (G)$ be the space of locally constant 
compactly supported functions $G \to \bC$. 
We take a Haar measure $d\mu_G$ on $G$. 
We recall the following fact due to Bushnell-Henniart: 

\begin{proposition}\label{charadef} 
For an irreducible smooth representation $\pi$ of $G$, 
there is a locally constant function 
$\tr \pi \colon G_{\mathrm{qr}} \to \bC$ characterized by 
\[
 \tr \pi (f) = \int_G \tr \pi (g) f (g) d\mu_G 
\]
for all $f \in \cH (G)$ with support contained in $G_{\mathrm{qr}}$. 
\end{proposition}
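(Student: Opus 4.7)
The plan is to assemble three standard ingredients: the admissibility of $\pi$, Harish-Chandra's local integrability theorem for the distribution character, and a refinement of its local-constancy conclusion from $G_{\mathrm{reg}}$ to $G_{\mathrm{qr}}$ that is specific to inner forms of $\iGL_n$.

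First, since $\pi$ is irreducible and smooth, it is admissible; the general admissibility theorem for irreducible smooth representations of reductive $p$-adic groups applies to $G = A^{\times}$, which is the group of $K$-points of an inner form of $\iGL_n$. Hence for every $f \in \cH(G)$ the operator $\pi(f) = \int_G f(g) \pi(g)\, d\mu_G$ has finite-dimensional image, and $\Theta_{\pi} \colon f \mapsto \tr \pi(f)$ is a well-defined distribution on $G$. Harish-Chandra's regularity theorem then produces a locally integrable function on $G$, locally constant on the regular semisimple locus $G_{\mathrm{reg}}$, that represents $\Theta_{\pi}$; in particular, the asserted integral formula already holds for $f$ supported in $G_{\mathrm{reg}}$.

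The substantive remaining step is to promote ``locally constant on $G_{\mathrm{reg}}$'' to ``locally constant on $G_{\mathrm{qr}}$''. Here I would follow the approach indicated in \cite[1.1 Remark]{BHetJL}: for $g \in G_{\mathrm{qr}}$, the squarefreeness of the reduced characteristic polynomial forces, after passage to an algebraic closure, each generalized eigenspace of $g$ to consist of a single Jordan block, which yields $Z_A(g) = K[g]$, a commutative semisimple $K$-subalgebra of dimension $n$. This rigid centralizer structure---special to inner forms of $\iGL_n$ and failing for more general reductive groups---lets the local analysis near $g$ be reduced to the analogous question on the subgroup $K[g]^{\times}$, where full regularity in $G$ is not needed, and the local constancy of the character propagates to $g$ itself.

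The main obstacle is Harish-Chandra's local integrability theorem, the only genuinely analytic input; admissibility is formal, and the extension from regular to quasi-regular is a clean structural consequence of the shape of centralizers of quasi-regular elements in $A$.
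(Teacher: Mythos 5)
You are proposing a genuinely different route from the paper's, and there is a gap in it. The paper's proof is a one-line citation to \cite[(A.11) Corollary]{BHtliftI}, supplemented by the remark that the argument adapts from $\iGL_n(K)$ to the inner form $G=A^{\times}$ (cf.\ \cite[1.1 Remark and Proposition]{BHetJL}). Bushnell--Henniart's appendix is designed precisely to \emph{avoid} Harish-Chandra's local integrability theorem: it establishes the existence and local constancy of the character directly on $G_{\mathrm{qr}}$ by an elementary analysis of orbital integrals using the centralizer structure $Z_A(g)=K[g]$, uniformly in the characteristic of $K$. Your proposal instead takes Harish-Chandra's regularity theorem as the analytic core and then bolts on an extension from $G_{\mathrm{reg}}$ to $G_{\mathrm{qr}}$.

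That plan has a logical tension and a missing step. The proposition is stated for $K$ of arbitrary characteristic. If $K$ has characteristic $0$, separable and square-free coincide, so $G_{\mathrm{qr}}=G_{\mathrm{reg}}$ and your extension step is vacuous; there is nothing to do beyond Harish-Chandra. If $K$ has positive characteristic, which is exactly when $G_{\mathrm{qr}}\supsetneq G_{\mathrm{reg}}$ and the extension is substantive, the Harish-Chandra local integrability theorem is not the classical result and cannot be invoked as a black box: one needs Lemaire's theorem for $\iGL_n$ over a local function field and its extension to inner forms, neither of which you cite, and which the BH route deliberately sidesteps. More seriously, the extension step is where all the content lies in that case, and you only gesture at it (``lets the local analysis near $g$ be reduced to $K[g]^{\times}$ \dots\ and the local constancy propagates''). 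The observation $Z_A(g)=K[g]$ is correct and is indeed the structural input, but what actually has to be shown is that the locally-$L^1$ function furnished by Harish-Chandra on $G_{\mathrm{reg}}$ extends to a locally constant function across the inseparable locus $G_{\mathrm{qr}}\setminus G_{\mathrm{reg}}$ and that the integral representation of the distribution $\Theta_{\pi}$ survives this extension. That requires a genuine analysis of orbital integrals near inseparable quasi-regular elements; it is not a formal consequence of centralizer rigidity, and it is precisely the content of \cite[(A.11)]{BHtliftI} that you would in effect be reproving.
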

\begin{proof}
This is proved in \cite[(A.11) Corollary]{BHtliftI} in the case where $G=\iGL_n (K)$. 
The same arguments work also in our situation 
(cf.~\cite[1.1 Remark and Proposition]{BHetJL}). 
\end{proof}

For an irreducible smooth representation $\pi$ of $G$, 
let $\tr \pi$ be the function in Proposition \ref{charadef}, 
which we call the character of $\pi$. 

Let $\eta=(\zeta,\chi,c) \in \mu_{q-1}(K) \times 
 (k^{\times})^{\vee} \times \mathbb{C}^{\times}$. 
\begin{lemma}\label{Mac2}
We have 
\[
 \tr \pi_{D,m,\eta}(g) = 
 \sum_{x \in G / L_{\zeta}^{\times}U_{\mathfrak{A}}^1, \, 
 x^{-1} g x \in L_{\zeta}^{\times}U_{\mathfrak{A}}^1}
 \theta_{D,m,\eta}(x^{-1}g x) 
\]
for $g \in G_{\mathrm{qr}}^{\mathrm{ell}}$. 
\end{lemma}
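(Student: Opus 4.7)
The plan is to derive this from the standard Frobenius/Mackey character formula for a compactly induced representation, specialised to $\pi_{D,m,\eta}=\mathrm{c\mathchar`-Ind}_H^G \theta_{D,m,\eta}$ with $H=L_\zeta^\times U_{\mathfrak{A}}^1$. By Lemma~\ref{piepi}, $\pi_{D,m,\eta}$ is irreducible supercuspidal, so its character is a well-defined locally constant function on $G_{\mathrm{qr}}$ by Proposition~\ref{charadef}. Moreover, $H$ is open in $G$ and compact modulo the centre $K^\times$, and $\theta_{D,m,\eta}$ is one-dimensional; these are exactly the hypotheses under which the Frobenius formula for compact induction produces a pointwise expression for $\tr\pi_{D,m,\eta}$ at those points where the relevant orbital sum is finite.

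The key auxiliary step is to verify that for $g\in G_{\mathrm{qr}}^{\mathrm{ell}}$ the index set
\[
 S_g=\{x\in G/H : x^{-1}gx\in H\}
\]
is finite. Since $g$ is elliptic, $K[g]$ is a field extension of degree $n$ of $K$, so the centraliser $Z_G(g)=K[g]^\times$ is compact modulo $K^\times$; combined with the fact that $H$ is compact modulo $K^\times$, a standard compactness/discreteness argument in the totally disconnected group $G/K^\times$ shows that the conjugates $x^{-1}gx$ lying in $H$ form finitely many $H$-conjugacy classes and that for each of them the set of $x\in G/H$ realising it is finite. With finiteness in hand, one chooses a small open neighbourhood $V$ of $g$ in $G_{\mathrm{qr}}^{\mathrm{ell}}$ on which, for each of the finitely many classes in $S_g$, the function $g'\mapsto \theta_{D,m,\eta}(x^{-1}g'x)$ is defined and constant, sets $f=\mu_G(V)^{-1}\mathbf{1}_V\in\cH(G)$, and computes $\tr\pi_{D,m,\eta}(f)$ in two ways: by Proposition~\ref{charadef} it tends to $\tr\pi_{D,m,\eta}(g)$, and by unfolding the trace on the compactly induced model along a set of coset representatives of $G/H$ it evaluates to the right-hand side sum.

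The main obstacle is the finiteness of $S_g$ together with the rigorous justification of the Frobenius-type unfolding of $\tr\pi_{D,m,\eta}(f)$; both of these are essentially carried out in the proof of \cite[(A.11) Corollary]{BHtliftI} in the case $G=\iGL_n(K)$, and the arguments transfer verbatim to our setting for the same reasons as in Proposition~\ref{charadef}, namely by \cite[1.1 Remark and Proposition]{BHetJL}. Once these general facts are invoked, the lemma is simply their specialisation to the compact induction datum $(H,\theta_{D,m,\eta})$, and no further calculation is required.
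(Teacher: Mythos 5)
Your proposal is correct and matches the paper's approach: the paper likewise just invokes the Frobenius/Mackey character formula for compactly induced representations at elliptic quasi-regular elements, citing \cite[1.1 Remark and (1.2.2)]{BHetJL} (cf.~\cite[(A.14)]{BHtliftI}), which is the same machinery you sketch. Your additional remarks on finiteness of the orbital sum via ellipticity and on unfolding the trace are precisely the content of those references.
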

\begin{proof}
This follows from 
\cite[1.1 Remark and (1.2.2)]{BHetJL} (cf.~\cite[(A.14)]{BHtliftI}). 
\end{proof}
We put $U_K^1=1+\mathfrak{p}_K \subset \mathcal{O}_K^{\times}$.  
We set $g_u=\varphi_{\zeta}(1+\varphi_{\zeta}u)
\in \mathfrak{A}$ for $u \in \mathfrak{A}$. 
\begin{lemma}\label{qregphi}
Let $u \in \mathfrak{A}$. 
We set 
\[
 f_{g_u}(x)=x^n +\sum_{i=0}^{n-1} a_i(u) x^i \in K[x].
\]
Then, we have 
\[
 a_i(u) \in \mathfrak{p}_K \quad \textrm{for $1 \leq i \leq n-1$}, \quad 
 -\frac{a_0(u)}{\zeta \varpi} \in U_K^1. 
\]
Furthermore, 
$g_u$ 
is an elliptic quasi-regular element. 
\end{lemma}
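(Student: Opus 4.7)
The plan is to show that $f_{g_u}(x) \in \mathcal{O}_K[x]$ is an Eisenstein polynomial with respect to $\mathfrak{p}_K$; this simultaneously yields the claimed valuation estimates on the coefficients and forces $g_u$ to be elliptic (hence quasi-regular, since an irreducible polynomial has no repeated irreducible factor). The two inputs are a reduced-norm computation for the constant term and a Newton polygon analysis based on viewing $g_u$ as a uniformizer of the hereditary order $\mathfrak{A}$.

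For the constant term, since $f_{g_u}(x) = \det(xI - g_u)$ after base change to $A \otimes_K \overline{K} \simeq M_n(\overline{K})$, we have $a_0(u) = (-1)^n \Nrd_{A/K}(g_u)$. Writing $g_u = \varphi_{\zeta}(1 + \varphi_{\zeta} u)$, multiplicativity of the reduced norm and \eqref{Nrd} give
\[
 -\frac{a_0(u)}{\zeta \varpi} = \Nrd_{A/K}(1 + \varphi_{\zeta} u),
\]
and the standard containment $\Nrd_{A/K}(U_{\mathfrak{A}}^1) \subseteq U_K^1$ delivers the second claimed conclusion.

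For the remaining coefficients, I would show that every root $\lambda \in \overline{K}$ of $f_{g_u}(x)$ satisfies $v_K(\lambda) = 1/n$. Since $1 + \varphi_{\zeta} u \in \mathfrak{A}^{\times}$ preserves every $\Lambda_i$ and $\varphi_{\zeta} \Lambda_i = \Lambda_{i+1}$, one gets $g_u \Lambda_i = \Lambda_{i+1}$ for all $i$; iterating and using $\Lambda_{i+n} = \varpi \Lambda_i$ (which follows from $\mathfrak{p}_K \mathcal{O}_D = \mathfrak{p}_D^r$) yields $g_u^n \Lambda_i = \varpi \Lambda_i$, so $s := \varpi^{-1} g_u^n$ and its inverse both preserve each $\Lambda_i$, i.e., $s \in \mathfrak{A}^{\times}$. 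Since $s$ and $s^{-1}$ are integral over $\mathcal{O}_K$, the roots of the reduced characteristic polynomial of $s$ are units in $\overline{\mathcal{O}_K}$; as these are $\varpi^{-1} \lambda^n$ for the various $\lambda$, we conclude $v_K(\lambda) = 1/n$. The Newton polygon of $f_{g_u}(x)$ is therefore the single segment from $(0,1)$ to $(n,0)$, forcing $v_K(a_i(u)) \geq (n-i)/n \geq 1/n$, and integrality of the valuation yields $a_i(u) \in \mathfrak{p}_K$ for $1 \leq i \leq n-1$.

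With these estimates, $f_{g_u}(x)$ is Eisenstein over $\mathcal{O}_K$ relative to $\mathfrak{p}_K$, hence irreducible over $K$. Consequently the minimal polynomial of $g_u$ coincides with $f_{g_u}(x)$, proving ellipticity; and $f_{g_u}(x)$, being irreducible, has no repeated irreducible factor, proving quasi-regularity. The main obstacle is the lattice-chain bookkeeping establishing $g_u \Lambda_i = \Lambda_{i+1}$ and $\varpi^{-1} g_u^n \in \mathfrak{A}^{\times}$; once these are in hand, the Newton polygon and Eisenstein arguments are routine.
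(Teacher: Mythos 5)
Your proposal is correct, and both the constant-term computation via the reduced norm and the Eisenstein endgame match what the paper does. The genuine difference lies in how you control the intermediate coefficients $a_1(u),\dots,a_{n-1}(u)$. You go through the Newton polygon: from the lattice chain you extract $g_u\Lambda_i=\Lambda_{i+1}$ and hence $\varpi^{-1}g_u^n\in\mathfrak{A}^{\times}$, so every eigenvalue of $g_u$ over $\overline{K}$ has valuation exactly $1/n$; the Newton polygon is then the single segment from $(0,1)$ to $(n,0)$, and integrality of $v_K$ on $K^{\times}$ upgrades the resulting bound $v_K(a_i(u))\geq (n-i)/n$ to $a_i(u)\in\mathfrak{p}_K$. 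The paper instead notes in one line that $g_u\in\mathfrak{P}_{\mathfrak{A}}$ acts nilpotently on $\Lambda_0/\varpi\Lambda_0$; since the ordinary characteristic polynomial of $g_u$ on the natural module is the $r$-th power of the reduced one, this gives $\overline{f_{g_u}(x)}=x^n$ in $k[x]$ directly, i.e.\ all lower coefficients lie in $\mathfrak{p}_K$, without any Newton-polygon or integrality argument. Both routes are sound; yours yields a slightly sharper statement (the exact Newton polygon, hence $v_K(a_i(u))\geq\lceil (n-i)/n\rceil$ rather than just $\geq 1$) at the cost of the lattice-chain bookkeeping you flagged, while the paper's nilpotence observation is shorter and sidesteps the eigenvalue computation entirely.
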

\begin{proof}
We see that 
$a_i(u) \in \mathfrak{p}_K$ for $0 \leq i \leq n-1$, 
since the left multiplication 
on $\cO_D /\varpi \cO_D$ by 
$g_u$ 
acts nilpotently. 
We have 
\[
 a_0(u) =(-1)^n \mathrm{Nrd}_{A/K} (g_u) \in 
 \mathfrak{p}_K \setminus \mathfrak{p}_K^2. 
\]
Hence, the former assertion follows from
\eqref{Nrd}
%$\Nrd_{A/K}(\varphi_{\zeta})=(-1)^{n-1}\zeta \varpi$
and $\Nrd_{A/K}(1+\varphi_{\zeta} u) \in U_K^1$. 
By Eisenstein's irreducibility criterion,
$f_{g_u}(x)$ is irreducible over $K$.  
Therefore, the latter assertion follows. 
\end{proof}

\begin{lemma}\label{gLA}
Let $g \in G$ and $u \in \mathfrak{A}$. 
%Let the notation be as in Lemma \ref{qregphi}. 
Assume that 
$g^{-1} g_u g 
 \in \mathfrak{P}_{\mathfrak{A}}$. 
Then we have $g \in L_{\zeta}^{\times} \mathfrak{A}^{\times}$. 
\end{lemma}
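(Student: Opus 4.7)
Set $\beta = g^{-1} g_u g$. The plan is first to show, by a reduced-norm and index computation, that $\beta$ itself normalizes $\mathfrak{A}$ with $\beta \mathfrak{A} = \mathfrak{P}_{\mathfrak{A}}$, and then to transfer this to $g$ via uniqueness of the hereditary order of period $n$ in which $g_u$ is a uniformizer of the Jacobson radical.

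By \eqref{Nrd} and Lemma \ref{qregphi}, $v_K(\Nrd_{A/K}(g_u)) = 1$, so by conjugation-invariance $v_K(\Nrd_{A/K}(\beta)) = 1$ as well. Since left multiplication by $\beta$ on $A$, viewed as a $K$-vector space, has determinant $\Nrd_{A/K}(\beta)^n$, this yields $[\mathfrak{A} : \beta \mathfrak{A}] = q^n$. On the other hand $\mathfrak{A}/\mathfrak{P}_{\mathfrak{A}} \cong k_r^m$ has cardinality $q^{rm} = q^n$. Combined with the inclusion $\beta \mathfrak{A} \subseteq \mathfrak{P}_{\mathfrak{A}}$ coming from the hypothesis, the equality of indices forces $\beta \mathfrak{A} = \mathfrak{P}_{\mathfrak{A}}$, and symmetrically $\mathfrak{A} \beta = \mathfrak{P}_{\mathfrak{A}}$; hence $\beta \mathfrak{A} \beta^{-1} = \mathfrak{A}$. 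Setting $\mathfrak{A}' := g \mathfrak{A} g^{-1}$, the hypothesis is equivalent to $g_u \in \mathfrak{P}_{\mathfrak{A}'}$, and the same argument applied to $(\mathfrak{A}', g_u)$ gives $g_u \mathfrak{A}' = \mathfrak{A}' g_u = \mathfrak{P}_{\mathfrak{A}'}$.

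It remains to show $\mathfrak{A}' = \mathfrak{A}$. Both are hereditary orders of period $n$ containing $\mathcal{O}_E = \mathcal{O}_K[g_u]$ with $g_u$ a uniformizer of $E := K[g_u]$ generating the Jacobson radical. Since $r \mid [E:K] = n$, the field $E$ splits $D$, so $D \otimes_K E \cong M_r(E)$; and since $E/K$ is totally ramified, the compositum $L := K_r \cdot E$ is a field, unramified over $E$ of degree $r$. An identification $V = D^{\oplus m} \cong L$ as $D \otimes_K E$-module (with $\varphi_D$ acting as a generator of $\Gal(L/E)$) shows that the $\mathcal{O}_D \otimes_{\mathcal{O}_K} \mathcal{O}_E = \mathcal{O}_L[\varphi_D]$-stable full lattices in $V \cong L$ are precisely the fractional ideals $\mathfrak{p}_L^k$ for $k \in \mathbb{Z}$, all in a single orbit under the shift by the uniformizer $g_u$ of $L$. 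The associated chain, and hence the hereditary order, is therefore unique. So $\mathfrak{A}' = \mathfrak{A}$ and $g \in L_{\zeta}^{\times} \mathfrak{A}^{\times}$.

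The main obstacle is the uniqueness step: one must carefully set up the identification $V \cong L$ and verify that $\mathcal{O}_E$-stability combined with $\mathcal{O}_D$-stability classifies the lattices as fractional ideals of $\mathcal{O}_L$.
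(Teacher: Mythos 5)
Your proof is correct, and it takes a genuinely different route from the paper's. The paper argues by a direct computation: writing $g=\varphi_{\zeta}^{l}s$ with $s\in\mathfrak{A}\setminus\mathfrak{P}_{\mathfrak{A}}$, the hypothesis together with a reduced-norm estimate (the same observation underlying your index count) forces $\varphi_{\zeta}s\mathfrak{A}=s\varphi_{\zeta}\mathfrak{A}$ modulo $\mathfrak{P}_{\mathfrak{A}}^{2}$, from which one reads off entry-by-entry that every diagonal block $s_{i,i}$ lies in $\mathcal{O}_{D}^{\times}$, so $s\in\mathfrak{A}^{\times}$. You instead reduce to a rigidity statement: $\mathfrak{A}$ is the \emph{unique} hereditary order in $A$ containing $\mathcal{O}_{E}$ (with $E=K[g_{u}]$) in which $g_{u}$ generates the radical, and then apply this to both $\mathfrak{A}$ and $g\mathfrak{A}g^{-1}$. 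This is the structural fact behind the lemma --- it is also what \cite[(1.5.2) Proposition (ii)]{BFgauss} supplies in the proof of Proposition~\ref{epiclass}: since $[E:K]=n$, the centralizer of $E$ in $A$ is $E$ itself, whose only hereditary order is $\mathcal{O}_{E}$, so there is a unique $E^{\times}$-normalized hereditary order in $A$. What your route buys is exactly this conceptual picture, that $g_{u}$ being a uniformizer of a maximal subfield rigidifies the order, at the cost of the lattice classification you flag. That step does close as you anticipate: $\mathcal{O}_{E}\otimes_{\mathcal{O}_{K}}\mathcal{O}_{D}\cong\mathcal{O}_{L}[\varphi_{D,\zeta}]$ with $L=K_{r}\cdot E$, and under any $L$-module identification $V\cong L$ the element $\varphi_{D,\zeta}$ acts as $c\sigma$ for a generator $\sigma$ of $\Gal(L/E)$ with $v_{L}(c)=m>0$, so every fractional ideal $\mathfrak{p}_{L}^{k}$ is stable and these form one chain with $g_{u}$-shift $1$; since $g_{u}\mathfrak{A}=\mathfrak{P}_{\mathfrak{A}}$ and $g_{u}(g\mathfrak{A}g^{-1})=\mathfrak{P}_{g\mathfrak{A}g^{-1}}$ both have shift $1$, each lattice chain is the full chain, giving $g\mathfrak{A}g^{-1}=\mathfrak{A}$. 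In short, the paper's proof is shorter and elementary, while yours is longer but isolates the structural reason the statement holds.
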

\begin{proof}
We take $l \in \bZ$ and 
$s =(s_{i,j})_{1 \leq i,j \leq m} 
 \in \mathfrak{A} \setminus \mathfrak{P}_{\mathfrak{A}}$ 
such that 
$g=\varphi_{\zeta}^l s$. 
We put 
$u'=\varphi_{\zeta}^{-l} u \varphi_{\zeta}^l \in \mathfrak{A}$. 
Then we have 
\[
 g^{-1} g_u%\varphi_{\zeta} (1+\varphi_{\zeta} u) 
 g = 
 s^{-1} g_{u'}%\varphi_{\zeta} (1+\varphi_{\zeta} u') 
 s 
 \in \mathfrak{P}_{\mathfrak{A}}. 
\]
Hence, we have 
$g_{u'}%\varphi_{\zeta} (1+\varphi_{\zeta} u')
 s \mathfrak{A}
 \subset s \varphi_{\zeta} \mathfrak{A}$. 
This implies 
$g_{u'}%\varphi_{\zeta} (1+\varphi_{\zeta} u') 
s \mathfrak{A}
 = s \varphi_{\zeta} \mathfrak{A}$, 
since 
we have 
\[
\mathrm{Nrd}_{A/K} (\varphi_{\zeta}^{-1} s^{-1} 
g_{u'} %\varphi_{\zeta} (1+\varphi_{\zeta} u') 
s) \in \cO_K^{\times}.
\] 
Therefore we have 
$\varphi_{\zeta} s \mathfrak{A}
 = s \varphi_{\zeta} \mathfrak{A}$ in 
$\mathfrak{P}_{\mathfrak{A}} / \mathfrak{P}_{\mathfrak{A}}^2$. 
This implies that 
$s_{i,i} \in \cO_D^{\times}$ if and only if 
$s_{i+1,i+1} \in \cO_D^{\times}$ 
for each $1 \leq i \leq m-1$. 
Then we have $s_{i,i} \in \cO_D^{\times}$ for 
all $1 \leq i \leq m$, since 
$s \notin \mathfrak{P}_{\mathfrak{A}}$. 
Hence, we have $s \in \mathfrak{A}^{\times}$. 
This shows the claim. 
\end{proof}

We put $n_q=(n,q-1)$. 
For $a \in k$, we put 
\[
 G_n (\chi, \psi, a) = 
 \sum_{x \in \mu_{n_q}(k)} 
 \chi (x) \psi (ax). 
\]
Note that $G_{q-1} (\chi, \psi, 1)$ is a usual Gauss sum, 
for which we write $G(\chi, \psi)$. 

\begin{proposition}\label{trGa}
For $u \in \mathfrak{A}$, we have 
\[
 \tr \pi_{D,m,\eta} \bigl( g_u%\varphi_{\zeta}(1+\varphi_{\zeta} u) 
 \bigr) = 
 (-1)^{m-1} c\, G_n \bigl( \chi, \psi, \overline{\Trd_{A/K}(u)} \bigr). 
\]
\end{proposition}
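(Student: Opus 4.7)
The plan is to expand the Mackey-type sum of Lemma~\ref{Mac2} and evaluate it using the structure of the hereditary order $\mathfrak{A}$.

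Since $g_u = \varphi_\zeta (1 + \varphi_\zeta u) \in \varphi_\zeta U_\mathfrak{A}^1 \subset \mathfrak{P}_\mathfrak{A}$, comparing reduced norms forces any conjugate with $x^{-1} g_u x \in L_\zeta^\times U_\mathfrak{A}^1$ to lie in $\varphi_\zeta \mu_{q-1}(K) U_\mathfrak{A}^1 \subset \mathfrak{P}_\mathfrak{A}$. Lemma~\ref{gLA} then gives $x \in L_\zeta^\times \mathfrak{A}^\times$, and since $\mathfrak{A}^\times \cap L_\zeta^\times U_\mathfrak{A}^1 = \mu_{q-1}(K) U_\mathfrak{A}^1$, the sum becomes one over cosets $x \in \mathfrak{A}^\times / \mu_{q-1}(K) U_\mathfrak{A}^1$. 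Factoring $x^{-1} g_u x = (x^{-1} \varphi_\zeta x)(1 + x^{-1} \varphi_\zeta u x)$ with the second factor in $U_\mathfrak{A}^1$, the condition reduces to $x^{-1}\varphi_\zeta x \in L_\zeta^\times U_\mathfrak{A}^1$. Under the identification $\mathfrak{A}^\times / U_\mathfrak{A}^1 \simeq T := (k_r^\times)^m$ (the diagonal part of $\mathfrak{A}/\mathfrak{P}_\mathfrak{A}$), conjugation by $\varphi_\zeta$ acts as the twisted cyclic shift $\alpha(d_1, \ldots, d_m) = (d_2, \ldots, d_m, d_1^{q^s})$. Writing $\bar x = (d_1, \lambda d_1, \ldots, \lambda^{m-1} d_1)$ with $\lambda \in k^\times$, the closure forces $d_1^{q^s - 1} = \lambda^m$; since $\gcd(s, r) = 1$, the map $d \mapsto d^{q^s - 1}$ on $k_r^\times$ has kernel $k^\times$ and image $\ker(N_{k_r/k})$. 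Solvability amounts to $\lambda^{mr} = \lambda^n = 1$, i.e., $\lambda \in \mu_{n_q}(k)$, and for each such $\lambda$ the solutions $d_1$ form a single $k^\times$-orbit. Thus the contributing cosets are in bijection with $\mu_{n_q}(k)$.

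For each $\lambda \in \mu_{n_q}(k)$, take Teichm\"uller lifts $\hat\lambda \in \mu_{q-1}(K)$ and $\hat d_1 \in \mu_{q^r - 1}(K_r)$ with $\hat d_1^{q^s - 1} = \hat\lambda^m$, and set $x_\lambda = \mathrm{diag}(\hat d_1, \hat\lambda \hat d_1, \ldots, \hat\lambda^{m-1} \hat d_1) \in \mathfrak{A}^\times$. The matrix identity $\varphi_\zeta \cdot \mathrm{diag}(d_1, \ldots, d_m) = \mathrm{diag}(d_2, \ldots, d_m, d_1^{q^s}) \cdot \varphi_\zeta$, combined with the closure relation, yields $x_\lambda^{-1} \varphi_\zeta x_\lambda = \hat\lambda \varphi_\zeta$. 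Hence $x_\lambda^{-1} g_u x_\lambda = \hat\lambda \varphi_\zeta (1 + v_\lambda)$ with $v_\lambda = x_\lambda^{-1} \varphi_\zeta u x_\lambda$. Using that $\theta_{D,m,\eta}$ is a character together with the defining formulas \eqref{ccd}, this contributes $\chi(\lambda) (-1)^{m-1} c \cdot \psi_K(\Trd_{A/K}(\varphi_\zeta^{-1} v_\lambda))$. Rearranging $x_\lambda^{-1} \varphi_\zeta x_\lambda = \hat\lambda \varphi_\zeta$ to $x_\lambda \varphi_\zeta^{-1} x_\lambda^{-1} \varphi_\zeta = \hat\lambda$, and using cyclicity of $\Trd_{A/K}$,
\[
 \Trd_{A/K}(\varphi_\zeta^{-1} v_\lambda) = \Trd_{A/K}(x_\lambda \varphi_\zeta^{-1} x_\lambda^{-1} \varphi_\zeta u) = \Trd_{A/K}(\hat\lambda u) = \hat\lambda \Trd_{A/K}(u).
\]
Since $\psi_K$ factors through reduction modulo $\mathfrak{p}_K$, summing over $\lambda \in \mu_{n_q}(k)$ yields the claimed expression $(-1)^{m-1} c \cdot G_n(\chi, \psi, \overline{\Trd_{A/K}(u)})$.

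The hard part is the first paragraph: identifying the contributing cosets. The delicate point is that the arithmetic index $n_q = \gcd(n, q-1)$ emerges from the norm-type condition $\lambda^n = 1$ produced by closing up the twisted cyclic shift $\alpha$, and one must verify that the $k^\times$-ambiguity in choosing $d_1$ matches exactly the quotient by $\mu_{q-1}(K) U_\mathfrak{A}^1$, so that each $\lambda \in \mu_{n_q}(k)$ contributes exactly one term.
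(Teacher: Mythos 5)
Your proof is correct and follows the same route as the paper's: apply Lemma \ref{Mac2} (which, as the paper notes, requires Lemma \ref{qregphi} to know $g_u$ is elliptic quasi-regular), use Lemma \ref{gLA} to restrict the Mackey sum to $L_\zeta^\times\mathfrak{A}^\times$, and identify the contributing cosets with $\mu_{n_q}(k)$ via the diagonal Teichm\"uller matrices $\mathrm{diag}(d_1,\lambda d_1,\dots,\lambda^{m-1}d_1)$ subject to $d_1^{q^s-1}=\lambda^m$ (the paper's set $H$). You supply the details the paper leaves implicit — the derivation of the constraint $d_1^{q^s-1}=\lambda^m$ from the twisted shift and the explicit evaluation of $\theta_{D,m,\eta}(x_\lambda^{-1}g_u x_\lambda)$ via $x_\lambda\varphi_\zeta^{-1}x_\lambda^{-1}\varphi_\zeta=\hat\lambda$ — but the underlying argument is the same.
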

\begin{proof}
For $\lambda \in \mu_{q-1}(K)$, let 
\[
 g(\lambda)  =
 \mathrm{diag} (1,\lambda,\cdots,\lambda^{m-1}) \in G. 
\]
We put 
\[
 H =\bigl\{d g(\lambda) \in 
 G \bigm| d \in 
 \mu_{q^r-1}(K_r), \ 
 \lambda \in \mu_{q-1}(K), \ 
 d^{q^s -1}=\lambda^m \bigr\}. 
\]
Note that 
$\lambda$ in the definition of $H$ 
automatically belongs to $\mu_{n_q}(K)$. 
Then, we can check that 
$H$ is equal to 
\[
 \bigl\{ x= \mathrm{diag} (d_1,\cdots,d_m ) \in G 
 \bigm| 
 x^{-1} g_u%\varphi_{\zeta}(1+\varphi_{\zeta} u)
  x \in L_{\zeta}^{\times} U_{\mathfrak{A}}^1,\ 
 d_i \in \mu_{q^r-1}(K_r) \ 
 \textrm{for all}\ i 
 \bigr\}. 
\]
Hence, we see that 
\[
 \bigl\{x \in G \bigm| 
 x^{-1} g_u%\varphi_{\zeta}(1+\varphi_{\zeta} u) 
 x \in L_{\zeta}^{\times} U_{\mathfrak{A}}^1 
 \bigr\} =H L_{\zeta}^{\times} U_{\mathfrak{A}}^1 
\]
by Lemma \ref{gLA}. 
We have the isomorphism 
\[
 H L_{\zeta}^{\times} U_{\mathfrak{A}}^1/
 L_{\zeta}^{\times}U_{\mathfrak{A}}^1 \xrightarrow{\sim}
 \mu_{n_q}(k);\ 
 d g(\lambda) \mapsto \bar{\lambda}. 
\]
Hence, the claim follows from 
Lemma \ref{Mac2} and Lemma \ref{qregphi}. 
\end{proof}

\begin{lemma}\label{qreg1phi}
The element 
$1+\varphi_{\zeta}$ 
is elliptic quasi-regular. 
\end{lemma}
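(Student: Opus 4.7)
The plan is to show that $1+\varphi_{\zeta}$ generates the maximal subfield $L_{\zeta}$ of $A$, and then to identify its reduced characteristic polynomial with its $K$-minimal polynomial. Once that identification is made, ellipticity and quasi-regularity become the same statement, namely the irreducibility of a single polynomial, which is immediate from the Eisenstein property used for $\varphi_{\zeta}$ itself.

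First I would record that $1+\varphi_{\zeta}\in L_{\zeta}^{\times}$ and that $K(1+\varphi_{\zeta})=K(\varphi_{\zeta})=L_{\zeta}$; since this field has degree $n$ over $K$, the minimal polynomial $\mu(x)$ of $1+\varphi_{\zeta}$ over $K$ has degree exactly $n$. Because $L_{\zeta}$ is a maximal subfield of the central simple $K$-algebra $A$ (its degree $n$ equals the reduced degree $[A:K]^{1/2}$), the reduced characteristic polynomial $f_{1+\varphi_{\zeta}}(x)$ of $1+\varphi_{\zeta}$ coincides with $\mu(x)$: both are monic of degree $n$, and $\mu(x)$ divides $f_{1+\varphi_{\zeta}}(x)$ in $K[x]$.

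Next I would verify irreducibility concretely. Since $\varphi_{\zeta}^n=\zeta\varpi$, the element $1+\varphi_{\zeta}$ satisfies $(x-1)^n-\zeta\varpi=0$, and this polynomial has degree $n$, so it must equal $\mu(x)=f_{1+\varphi_{\zeta}}(x)$. Irreducibility over $K$ is immediate from Eisenstein's criterion applied after the substitution $y=x-1$, exactly as for $\varphi_{\zeta}$ in the proof of Lemma \ref{qregphi}.

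From irreducibility of $f_{1+\varphi_{\zeta}}(x)$ the two conditions in Definition \ref{qreg} are automatic: $f_{1+\varphi_{\zeta}}$ has a unique irreducible factor (namely itself), so it certainly has no repeated irreducible factor, proving quasi-regularity; and the minimal polynomial is irreducible, proving ellipticity. I do not anticipate a genuine obstacle here; the only subtle point is justifying that for $g\in L_{\zeta}$ with $K(g)=L_{\zeta}$ a maximal subfield of $A$, the reduced characteristic polynomial agrees with the $K$-minimal polynomial, but this is a standard fact about central simple algebras and requires only a one-line remark.
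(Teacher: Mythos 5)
Your proof is correct and takes essentially the same route as the paper's: the paper cites Lemma~\ref{qregphi} to get that $\varphi_{\zeta}$ is elliptic quasi-regular and then observes the claim follows (implicitly because translating by $1 \in K$ shifts the reduced characteristic polynomial and preserves irreducibility), while you unpack exactly that translation argument by writing down $(x-1)^n - \zeta\varpi$ and applying Eisenstein directly. Both reduce to the same Eisenstein irreducibility; yours is just spelled out rather than delegated to the previous lemma.
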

\begin{proof}
The element $\varphi_{\zeta}$ is 
elliptic quasi-regular by Lemma \ref{qregphi}. 
Hence, the claim follows. 
\end{proof}

\begin{lemma}\label{gLA2}
Let $g \in G$ and $\lambda \in \mu_{q-1}(K)$. 
Assume that 
$g^{-1} (1+\varphi_{\zeta \lambda}) g 
 \in L_{\zeta}^{\times} U_{\mathfrak{A}}^1$. 
Then we have $g \in L_{\zeta}^{\times} \mathfrak{A}^{\times}$. 
\end{lemma}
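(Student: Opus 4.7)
The plan is to first strengthen the hypothesis to $g^{-1}\varphi_{\zeta\lambda}g \in \mathfrak{P}_{\mathfrak{A}}$, and then adapt the proof of Lemma~\ref{gLA} with a harmless diagonal-unit modification.

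For the strengthening, I would write $g^{-1}(1+\varphi_{\zeta\lambda})g = \mu u$ with $\mu \in L_\zeta^\times$ and $u \in U_{\mathfrak{A}}^1$. Since the reduced characteristic polynomial of $\varphi_{\zeta\lambda}$ is $X^n - \zeta\lambda\varpi$, we have $\Nrd_{A/K}(1+\varphi_{\zeta\lambda}) = 1 - (-1)^n\zeta\lambda\varpi \in U_K^1$. Comparing $K$-valuations of reduced norms yields $v_K(\Nrd\mu) = 0$, so $\mu \in \mathcal{O}_{L_\zeta}^\times = \mu_{q-1}(K)\cdot U_{L_\zeta}^1$; absorbing the $U_{L_\zeta}^1$ factor into $u$, I may assume $\mu = \omega \in \mu_{q-1}(K)$. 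If $\omega \neq 1$, then $\omega - 1 \in \mathcal{O}_K^\times$, and $g^{-1}\varphi_{\zeta\lambda}g = (\omega-1) + \omega(u-1) \in \mathcal{O}_K^\times + \mathfrak{P}_{\mathfrak{A}} \subset \mathfrak{A}^\times$ would contradict $v_K(\Nrd(\varphi_{\zeta\lambda})) = 1$. Hence $\omega = 1$ and $g^{-1}\varphi_{\zeta\lambda}g \in \mathfrak{P}_{\mathfrak{A}}$.

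Next, I would use the identity $\varphi_\zeta^{-1}\varphi_{\zeta\lambda} = \mathrm{diag}(b,1,\ldots,1) \in \mathfrak{A}^\times$, where $b := \varphi_{D,\zeta}^{-1}\varphi_{D,\zeta\lambda} \in \mathcal{O}_D^\times$ satisfies $\Nrd_{D/K}(b) = \lambda$; this is immediate from the matrix forms of $\varphi_\zeta$ and $\varphi_{\zeta\lambda}$. Writing $g = \varphi_\zeta^l s$ with $s \in \mathfrak{A}\setminus\mathfrak{P}_{\mathfrak{A}}$ and setting $d_l := \varphi_\zeta^{-l}\mathrm{diag}(b,1,\ldots,1)\varphi_\zeta^l$, we have $g^{-1}\varphi_{\zeta\lambda}g = s^{-1}(\varphi_\zeta d_l)s \in \mathfrak{P}_{\mathfrak{A}}$, with $d_l$ a diagonal matrix all of whose entries lie in $\mathcal{O}_D^\times$ (conjugation by $\varphi_\zeta$ cyclically permutes the diagonal, with a Frobenius twist at the wraparound).

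Finally, I would mimic the proof of Lemma~\ref{gLA} almost verbatim. The reduced-norm check $\Nrd_{A/K}(\varphi_\zeta^{-1}s^{-1}\varphi_\zeta d_l s) = \Nrd(d_l) = \lambda \in \mathcal{O}_K^\times$ yields $\varphi_\zeta d_l s\,\mathfrak{A} = s\varphi_\zeta\mathfrak{A}$. Projecting to $\mathfrak{P}_{\mathfrak{A}}/\mathfrak{P}_{\mathfrak{A}}^2$, left multiplication by the diagonal unit $d_l$ merely rescales each row of $s$ by a unit of $\mathcal{O}_D$ and therefore preserves the subset $T = \{i : s_{i,i}\in\mathcal{O}_D^\times\}$, so the cyclic shift argument of Lemma~\ref{gLA} applies unchanged and forces $T$ to be shift-invariant in $\mathbb{Z}/m\mathbb{Z}$. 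Since $s \notin \mathfrak{P}_{\mathfrak{A}}$ implies $T \neq \emptyset$, we conclude $T = \{1,\ldots,m\}$, hence $s \in \mathfrak{A}^\times$ and $g \in L_\zeta^\times\mathfrak{A}^\times$. The main obstacle is the initial strengthening step: ruling out a nontrivial Teichmuller factor $\omega \ne 1$ requires careful reduced-norm bookkeeping. Once $g^{-1}\varphi_{\zeta\lambda}g$ lies in $\mathfrak{P}_{\mathfrak{A}}$, the remainder is a routine adaptation of Lemma~\ref{gLA}, as the extra diagonal factor $d_l$ is invisible to the cyclic-support analysis.
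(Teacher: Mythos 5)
Your proof is correct but takes a mildly different route from the paper at both key steps, so here is a comparison.

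For the strengthening to $g^{-1}\varphi_{\zeta\lambda}g \in \mathfrak{P}_{\mathfrak{A}}$, the paper writes $g^{-1}(1+\varphi_{\zeta\lambda})g = \lambda_0(1+v)$ with $\lambda_0 \in \mu_{q-1}(K)$ and $v \in \mathfrak{P}_{\mathfrak{A}}$, and kills $\lambda_0$ by a one-line limit trick: $\lambda_0 = \lim_{l\to\infty}(\lambda_0(1+v))^{q^l} = \lim_{l\to\infty} g^{-1}(1+\varphi_{\zeta\lambda})^{q^l} g = 1$, using that both $v$ and $\varphi_{\zeta\lambda}$ are topologically nilpotent. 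Your reduced-norm bookkeeping ($v_K(\Nrd\mu)=0$ forces $\mu\in\mathcal{O}_{L_\zeta}^\times$; then $\omega\neq 1$ would put $g^{-1}\varphi_{\zeta\lambda}g$ into $\mathcal{O}_K^\times+\mathfrak{P}_{\mathfrak{A}}\subset\mathfrak{A}^\times$, contradicting $v_K(\Nrd\varphi_{\zeta\lambda})=1$) is equally valid and perhaps a little more explicit, but the paper's limit argument is slicker and avoids the case split.

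For the second step, you factor $\varphi_{\zeta\lambda}=\varphi_\zeta\cdot\mathrm{diag}(b,1,\ldots,1)$ and re-run the Lemma \ref{gLA} argument, checking that the extra diagonal unit $d_l$ does not disturb the cyclic-support analysis in $\mathfrak{P}_{\mathfrak{A}}/\mathfrak{P}_{\mathfrak{A}}^2$. This is correct, but it is more work than necessary: the paper simply observes that $\varphi_{\zeta\lambda}$ \emph{is} $\varphi_{\zeta'}$ for the uniformizer choice $\zeta'=\zeta\lambda$, so Lemma \ref{gLA} applies verbatim (with $u=0$) to give $g\in L_{\zeta\lambda}^\times\mathfrak{A}^\times$, and then $L_{\zeta\lambda}^\times\mathfrak{A}^\times = L_\zeta^\times\mathfrak{A}^\times$ since both equal the normalizer of $\mathfrak{A}$ in $G$. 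In short: instead of adapting the proof of Lemma \ref{gLA}, one can just apply its statement with a different $\zeta$. Both routes arrive at the same place; yours makes the diagonal twist explicit, the paper's exploits the built-in uniformity of Lemma \ref{gLA} in $\zeta$.
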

\begin{proof}
By the assumption, we have 
$g^{-1} (1+\varphi_{\zeta \lambda}) g =\lambda_0 (1+ v)$ 
with $\lambda_0 \in \mu_{q-1} (K)$ and 
$v \in \mathfrak{P}_{\mathfrak{A}}$. 
Then we have 
\[
 \lambda_0 = \lim_{l \to \infty} 
 \bigl( \lambda_0 (1+ v) \bigr)^{q^l}
 =\lim_{l \to \infty} 
 \bigl( g^{-1} (1+\varphi_{\zeta \lambda}) g \bigr)^{q^l} =1. 
\]
Hence we have 
$g^{-1} \varphi_{\zeta \lambda} g =v \in \mathfrak{P}_{\mathfrak{A}}$. 
Therefore, the claim follows from Lemma \ref{gLA} and 
$L_{\zeta}^{\times} \mathfrak{A}^{\times} = 
 L_{\zeta \lambda}^{\times} \mathfrak{A}^{\times}$. 
\end{proof}

For a finite field extension $k'$ of $k$, 
an additive character $\psi' \in k'^{\vee}$, 
a positive integer $l$ and 
$a \in k'^{\times}$, 
we put 
\[
 K_{l,a} (\psi') = 
 \sum_{\zeta_1 \cdots \zeta_l =a,\, \zeta_i \in k'^{\times}} 
 \psi' \biggl( \sum_{i=1}^{l} \zeta_i \biggr). 
\]
This is a generalized Kloosterman sum 
(cf.~\cite[Sommes trig.~7.1]{DeCE}). 

\begin{proposition}\label{trKl}
For $\lambda \in \mu_{q-1}(K)$, 
we have 
\begin{equation}\label{d1}
 \tr \pi_{D,m,\eta}(1+\varphi_{\zeta \lambda})
 = (-1)^{n-m} K_{n,\bar{\lambda}} (\psi). 
\end{equation}
\end{proposition}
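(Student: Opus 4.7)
The plan is to mimic the proof of Proposition \ref{trGa}: first reduce the character sum to an explicit exponential sum over diagonal matrices in $\mathfrak{A}^\times$, then identify it with a Kloosterman sum by means of a Hasse--Davenport-type identity.

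By Lemma \ref{qreg1phi}, $1+\varphi_{\zeta\lambda}$ is elliptic quasi-regular, so Lemma \ref{Mac2} applies. Lemma \ref{gLA2} restricts the summation to $x\in L_\zeta^\times\mathfrak{A}^\times$, and for every such $x$ the conjugate $x^{-1}(1+\varphi_{\zeta\lambda})x$ lies in $U_\mathfrak{A}^1\subset L_\zeta^\times U_\mathfrak{A}^1$ automatically, because $\varphi_{\zeta\lambda}\in\mathfrak{P}_\mathfrak{A}$ is preserved under $\mathfrak{A}^\times$-conjugation. Hence the sum runs over $\mathfrak{A}^\times/\mathcal{O}_{L_\zeta}^\times U_\mathfrak{A}^1$, and I would take diagonal matrices $x=\mathrm{diag}(d_1,\ldots,d_m)$ with $d_i\in\mu_{q^r-1}(K_r)$ as representatives, noting that each coset is hit $q-1$ times via the diagonal action of $\mu_{q-1}(K)\subset\mathcal{O}_{L_\zeta}^\times$. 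Write $\varphi_{D,\zeta\lambda}=\alpha\varphi_{D,\zeta}$ for an element $\alpha\in\mu_{q^r-1}(K_r)$ with $N_{K_r/K}(\alpha)=\lambda$ (which exists by surjectivity of the norm on Teichm\"uller lifts). A direct block computation shows that $\varphi_\zeta^{-1}x^{-1}\varphi_{\zeta\lambda}x$ is the diagonal matrix with $(1,1)$-entry $\varphi_{D,\zeta}^{-1}d_m^{-1}d_1^{q^s}\alpha\varphi_{D,\zeta}$ and $(i,i)$-entry $d_{i-1}^{-1}d_i$ for $2\le i\le m$. Using cyclicity of $\Trd_{D/K}$ on the first entry and the identity $\Trd_{D/K}|_{K_r}=\Tr_{K_r/K}$, together with $\psi_K\circ\Tr_{K_r/K}=\psi\circ\tr_{k_r/k}$ on Teichm\"uller lifts, the character value at $x$ becomes $\psi\bigl(\tr_{k_r/k}\bigl(\bar{d}_m^{-1}\bar{d}_1^{q^s}\bar{\alpha}+\sum_{i<m}\bar{d}_i^{-1}\bar{d}_{i+1}\bigr)\bigr)$.

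Next, I would substitute $e_i=\bar{d}_i^{-1}\bar{d}_{i+1}$ for $1\le i<m$ and $e_m=\bar{d}_m^{-1}\bar{d}_1^{q^s}\bar{\alpha}$. The resulting map $(k_r^\times)^m\to(k_r^\times)^m$ is $(q-1)$-to-one onto $\{(e_i):\prod_i e_i\in\bar{\alpha}(k_r^\times)^{q^s-1}\}$. Since $(s,r)=1$, one has $\gcd(q^s-1,q^r-1)=q-1$, so $(k_r^\times)^{q^s-1}$ has order $(q^r-1)/(q-1)$; and it is contained in $\ker(N_{k_r/k})$ because $N(x^{q^s-1})=N(x)^{q^s-1}=1$ for $N(x)\in k^\times$ and $(q-1)\mid(q^s-1)$. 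Hence the two subgroups coincide, and the constraint simplifies to $N_{k_r/k}(\prod_i e_i)=\bar{\lambda}$. After cancelling the two factors of $(q-1)$ (from the substitution and from the diagonal quotient) I arrive at
\[
 \tr\pi_{D,m,\eta}(1+\varphi_{\zeta\lambda})
 = \sum_{\substack{(e_i)\in(k_r^\times)^m \\ \prod_i N_{k_r/k}(e_i)=\bar{\lambda}}}\psi\Bigl(\tr_{k_r/k}\Bigl(\sum_i e_i\Bigr)\Bigr).
\]

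Finally, I would stratify by $a_i=N_{k_r/k}(e_i)\in k^\times$ and invoke the Hasse--Davenport-type identity
\[
 \sum_{e\in k_r^\times,\,N_{k_r/k}(e)=a}\psi(\tr_{k_r/k}(e)) = (-1)^{r-1}K_{r,a}(\psi)\qquad(a\in k^\times),
\]
which follows by Fourier inversion over $k^\times$ from the classical Hasse--Davenport relation $G_{k_r}(\chi\circ N,\psi\circ\Tr)=(-1)^{r-1}G_k(\chi,\psi)^r$ for Gauss sums (see \cite{DeCE}), combined with the elementary product formula $\sum_{\prod a_i=\bar{\lambda}}\prod_i K_{r,a_i}(\psi)=K_{mr,\bar{\lambda}}(\psi)$. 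Since $mr=n$ and $m(r-1)=n-m$, this yields $(-1)^{n-m}K_{n,\bar\lambda}(\psi)$, as required. The main technical step is the Hasse--Davenport identity above; the bookkeeping with the $(q-1)$-to-one substitution and the diagonal $\mu_{q-1}(K)$-quotient is straightforward but must be handled carefully to end up with the correct sign.
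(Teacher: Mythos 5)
Your proof is correct and follows essentially the same route as the paper's: Mackey's formula restricted to $L_\zeta^\times\mathfrak{A}^\times$ via Lemmas \ref{Mac2}, \ref{qreg1phi}, \ref{gLA2}, an explicit computation of the character value on diagonal coset representatives yielding a sum of the form $\sum_{N_{k_r/k}(\prod_i e_i)=\bar\lambda}\psi\bigl(\Tr_{k_r/k}\bigl(\sum_i e_i\bigr)\bigr)$, and a Hasse--Davenport reduction to $K_{n,\bar\lambda}(\psi)$. The only cosmetic difference is that the paper invokes \cite[(7.2.5)]{DeCE} as a black box for the last step, whereas you re-derive it from the classical Hasse--Davenport relation for Gauss sums together with the block decomposition of $K_{mr,\bar\lambda}$; both are sound.
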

\begin{proof}
Let $\lambda \in \mu_{q-1}(K)$. 
We have 
\begin{equation*}
 \tr \pi_{D,m,\eta} 
 (1+\varphi_{\zeta \lambda}) 
 = \sum_{g \in L_{\zeta}^{\times} \mathfrak{A}^{\times}/
 L_{\zeta}^{\times}U_{\mathfrak{A}}^1} 
 \theta_{D,m,\eta}(g^{-1}(1+\varphi_{\zeta \lambda})g) 
\end{equation*}
by Lemma \ref{Mac2}, Lemma \ref{qreg1phi} and Lemma \ref{gLA2}. 
We can check 
that 
\[
 \tr_{A/D} 
 (\varphi_{\zeta}^{-1}g^{-1}\varphi_{\zeta \lambda}g)
 \equiv 
 \varphi_{D,\zeta}^{-1}a_{m,m}^{-1} \varphi_{D, \zeta \lambda} 
 a_{1,1}+\sum_{i=1}^{m-1} a_{i,i}^{-1}a_{i+1,i+1} 
 \mod \mathfrak{p}_D 
\]
for 
$g=(a_{i,j})_{1 \leq i,j \leq m} \in \mathfrak{A}^{\times}$. 
For $d \in \mathcal{O}_D$, 
let $\bar{d}$ denote the image of $d$
by $\mathcal{O}_D \to k_r$. 
We have the bijection 
\begin{align*}
 L_{\zeta}^{\times}\mathfrak{A}^{\times}/
 L_{\zeta}^{\times}U_{\mathfrak{A}}^1 &\xrightarrow{\sim} 
 \bigl\{ (\zeta_1, \cdots , \zeta_m) \in (k_r^{\times})^{\oplus m} 
 \bigm| \Nr_{k_r/k} (\zeta_1 \cdots \zeta_m )=\bar{\lambda} \bigr\} \\
 (a_{i,j})_{1 \leq i,j \leq m} &\mapsto 
 \Bigl( ( \bar{a}_{i,i}^{-1} \bar{a}_{i+1,i+1} )_{1 \leq i \leq m-1}, 
 \overline{ \varphi_{D,\zeta}^{-1}a_{m,m}^{-1} \varphi_{D, \zeta \lambda} a_{1,1} } \Bigr), 
\end{align*}
where $(a_{i,j})_{1 \leq i,j \leq m} \in \mathfrak{A}^{\times}$. 
Hence, we have 
\[
 \tr \pi_{D,m,\eta} 
 (1+\varphi_{\zeta \lambda}) 
 = 
 \sum_{\Nr_{k_r/k}(x)=\bar{\lambda}} K_{m,x} (\psi \circ \Tr_{k_r/k}). 
\]
Further, we have 
\[
  \sum_{\Nr_{k_r/k}(x)=\bar{\lambda}} K_{m,x} (\psi \circ \Tr_{k_r/k})
 =(-1)^{m-1} \sum_{\Nr_{k_n/k}(y)=\bar{\lambda}} 
 \psi \circ \Tr_{k_n/k}(y)=(-1)^{n-m} 
 K_{n,\bar{\lambda}}(\psi) 
\]
by \cite[(7.2.5)]{DeCE}. 
Thus, we have proved the claim. 
\end{proof}

\section{Description of LJLC}\label{DesLJLC}
In this section, we give an explicit description of 
the local Jacquet-Langlands correspondence (LJLC). 

First, we recall the statement of the LJLC. 
Let $\mathcal{A}^{\square}_{D,m}$ 
denote the set of equivalent classes of 
irreducible essentially square-integrable 
smooth representations of 
$\textit{GL}_m(D)$. 
We put $G'=\iGL_n (K)$. 
We say that 
$g \in G_{\mathrm{qr}}$ and 
$g' \in G'_{\mathrm{qr}}$ correspond 
if $f_{g} (x)=f_{g'} (x)$. 

\begin{theorem}[{\cite[Th\'{e}or\`{e}me 5.1]{BadJLn} and \cite[Th\'{e}or\`{e}me principal]{DKV}}]\label{LJLCthm}
There exists a bijection 
\[
 \mathrm{JL}_{D,m} \colon \mathcal{A}^{\square}_{D,m} 
 \to \mathcal{A}^{\square}_{K,n}, 
\]
which is uniquely characterized by 
the character relation 
\[
 \tr \pi (g)=(-1)^{n-m} \tr 
 \mathrm{JL}_{D,m}(\pi ) (g')  
\]
for 
$\pi \in \mathcal{A}^{\square}_{D,m}$ and 
any corresponding 
$g \in G_{\mathrm{reg}}$ and $g' \in G'_{\mathrm{reg}}$. 
\end{theorem}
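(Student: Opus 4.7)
The plan is to prove this via a comparison of trace formulas, following the Deligne--Kazhdan--Vigneras strategy that was extended to general inner forms by Badulescu. I would separate uniqueness from existence, since uniqueness is essentially formal while existence requires global input.

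For uniqueness, I would argue as follows. Any irreducible essentially square-integrable representation $\pi$ has a character $\tr \pi$ that is locally integrable and, by the Weyl integration formula, is determined by its restriction to the regular semisimple locus. Moreover, linear independence of characters of irreducible admissible representations (combined with the fact that distinct essentially square-integrable representations of $G'$ have distinct central characters on a suitable basis, once we know the restriction to $G'_{\mathrm{reg}}$) forces the map $\mathrm{JL}_{D,m}$ to be unique if it exists, since the prescribed character identity on $G_{\mathrm{reg}}$ corresponding to $G'_{\mathrm{reg}}$ pins down $\tr \mathrm{JL}_{D,m}(\pi)$ on a dense subset of the elliptic locus of $G'$, and hence determines the representation.

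For existence, I would globalize. Choose a number field $F$ and a central simple $F$-algebra $\mathbf{A}$ of reduced degree $n$ with prescribed local behavior: $\mathbf{A}_v \cong A$ at one place $v_0$, $\mathbf{A}_w$ a suitable division algebra at some auxiliary place $w$ to force discreteness, and $\mathbf{A}_u \cong M_n(F_u)$ elsewhere. Then apply the simple trace formula to both $\mathbf{A}^{\times}$ and $\iGL_n$ over the adeles of $F$. The key technical input is a matching of orbital integrals: for every locally constant compactly supported $f \in \cH(G)$ one must produce $f' \in \cH(G')$ such that the orbital integrals on corresponding regular elliptic classes agree up to the sign $(-1)^{n-m}$, and vanish on non-elliptic regular classes of $G'$. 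Once the matching is in place, the elliptic parts of the two trace formulas coincide after test-function transfer, and one extracts the bijection on discrete series together with the character identity by separating spectral contributions using linear independence of characters at the remaining places.

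The main obstacle is precisely this orbital integral matching, which is a deep harmonic-analytic statement. In the division algebra case $m=1$ it is the theorem of Deligne--Kazhdan--Vigneras, proved by a topological argument on the regular elliptic locus combined with a density argument; for general $m$ it was established by Badulescu via a reduction to the $m=1$ case using Lie-algebra descent and endoscopic-type techniques. A secondary difficulty is handling the non-cuspidal discrete series (generalized Steinberg), for which one must combine the simple trace formula with the classification of discrete series by Bernstein--Zelevinsky on the $\iGL_n$ side and its analogue for $\iGL_m(D)$; this is typically done by induction on $n$ and requires compatibility of $\mathrm{JL}_{D,m}$ with parabolic induction and the Zelevinsky involution. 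Given the depth of these matching results, in the present paper I would simply invoke the cited theorems and proceed directly to exploiting the character identity.
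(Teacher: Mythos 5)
The paper gives no proof of this theorem: it is stated as a citation to Badulescu and to Deligne--Kazhdan--Vigneras, and no argument is supplied. Your sketch is a reasonable high-level summary of the proof in those references (global trace formula comparison, transfer of test functions, elliptic orbital integral matching, separation of the spectral side by linear independence of characters, with Badulescu's extension to general $m$ and to positive characteristic), and you correctly conclude that in this paper one should simply invoke the cited results --- which is exactly what the authors do, so your ultimate strategy matches the paper's.

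One small clarification worth flagging on the uniqueness step: since every field extension $L/K$ of degree $n$ splits $A$, every elliptic regular conjugacy class of $G' = \iGL_n(K)$ has a corresponding class in $G = \iGL_m(D)$, so the prescribed identity pins down $\tr\,\mathrm{JL}_{D,m}(\pi)$ on the \emph{entire} elliptic regular set of $G'$, not merely a dense subset of it. Uniqueness then follows because characters of discrete series representations form an orthonormal family for the elliptic inner product (Kazhdan's orthogonality relations), so equality on the elliptic set forces equality of representations; your parenthetical appeal to ``distinct central characters on a suitable basis'' is not the mechanism one actually uses and would not suffice on its own.
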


The bijection $\mathrm{JL}_{D,m}$ in 
Theorem \ref{LJLCthm} is called 
the local Jacquet-Langlands correspondence. 

\begin{corollary}\label{jl}
The bijection $\mathrm{JL}_{D,m}$ 
satisfies 
the character relation 
\[
 \tr \pi (g)=(-1)^{n-m} \tr 
 \mathrm{JL}_{D,m}(\pi ) (g') 
\]
for $\pi \in \mathcal{A}^{\square}_{D,m}$ and 
any corresponding 
$g \in G_{\mathrm{qr}}$ and 
$g' \in G'_{\mathrm{qr}}$. 
\end{corollary}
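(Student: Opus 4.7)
The plan is to extend the character relation in Theorem~\ref{LJLCthm} from the regular to the quasi-regular locus by combining the local constancy of both sides (Proposition~\ref{charadef}) with the density of regular elements. If $\mathrm{char}(K)=0$, every irreducible polynomial over $K$ is separable, so $G_{\mathrm{qr}}=G_{\mathrm{reg}}$ and similarly for $G'$, and the corollary is immediate; the substantive case is $\mathrm{char}(K)=p$.

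Fix corresponding $g\in G_{\mathrm{qr}}$ and $g'\in G'_{\mathrm{qr}}$. By Proposition~\ref{charadef} applied to $\pi$ and to $\mathrm{JL}_{D,m}(\pi)$, one can choose open neighborhoods $U\subset G_{\mathrm{qr}}$ of $g$ and $U'\subset G'_{\mathrm{qr}}$ of $g'$ on which $\tr\pi$ and $\tr\mathrm{JL}_{D,m}(\pi)$ are respectively constant. Write $f_g=f_1\cdots f_s$ as a product of distinct monic irreducibles in $K[x]$, and perturb the coefficients of each inseparable $f_i$ slightly to a nearby separable irreducible polynomial $\tilde f_i$; the $\tilde f_i$ remain pairwise coprime for small enough perturbation, so $\tilde f:=\tilde f_1\cdots\tilde f_s$ is a separable polynomial close to $f_g$ in the coefficientwise topology.

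Next, I would lift $\tilde f$ to regular elements $h\in U$ and $h'\in U'$ with $f_h=\tilde f=f_{h'}$. On the $G'$-side, realize $g'$ in rational canonical form as a block-diagonal matrix with companion blocks $C(f_i)$, and replace each $C(f_i)$ by $C(\tilde f_i)$: the result is a regular element of $U'$ once the $\tilde f_i$ are sufficiently close to the $f_i$. On the $G$-side, one deforms the embedding $K[g]\hookrightarrow A$ to an embedding $K[x]/(\tilde f)\hookrightarrow A$ close to it by applying Hensel's lemma to the minimal polynomial of a generator of $K[g]$, obtaining an element $h\in U\cap G_{\mathrm{reg}}$ with $f_h=\tilde f$. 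Theorem~\ref{LJLCthm} applied to the corresponding regular pair $(h,h')$ yields
\[
\tr\pi(h)=(-1)^{n-m}\tr\mathrm{JL}_{D,m}(\pi)(h'),
\]
which by the constancy of $\tr\pi$ on $U$ and of $\tr\mathrm{JL}_{D,m}(\pi)$ on $U'$ specializes to the desired identity at $(g,g')$.

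The main obstacle is realizing the perturbed polynomial $\tilde f$ as a reduced characteristic polynomial of an element of $G$ close to $g$: not every monic polynomial of degree $n$ over $K$ arises as a reduced characteristic polynomial on an inner form, so a direct lifting is not automatic. However, closeness of $\tilde f$ to $f_g$ combined with the Henselian property of $K$ ensures existence of a suitable deformation inside the commutative subalgebra $K[g]\subset A$, which is what the argument exploits.
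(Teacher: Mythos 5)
The overall strategy (reduce from quasi-regular to regular via local constancy and density) is the same one the paper uses, and your treatment of the $G'$-side and the polynomial perturbation is reasonable. But there is a genuine gap on the $G$-side, precisely at the step you yourself flag as the "main obstacle," and the resolution you offer does not work.

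The problem is with the claim that one can realize the perturbed separable polynomial $\tilde f$ as $f_h$ for some $h$ close to $g$ by "a suitable deformation inside the commutative subalgebra $K[g]\subset A$" via Hensel's lemma. In the only case that matters, $\operatorname{char}(K)=p$ and at least one irreducible factor of $f_g$ is inseparable. Take $g$ elliptic, so $L=K[g]$ is a field of degree $n$ with $L/K$ inseparable. Then no element of $L$ has separable reduced characteristic polynomial: if $h\in L$ generates $L$ over $K$, its minimal polynomial generates the inseparable field $L$ and so is inseparable; if $h$ does not generate $L$, then $f_h$ is a proper power of a lower-degree polynomial and is not even squarefree. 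So one cannot find a regular $h$ inside $K[g]$ at all. Applying Hensel's lemma to $\tilde f$ in $A$ does not save this either: since $f_g'$ vanishes (or has very small value at $g$) in the inseparable case, $\tilde f'(g)$ is as small as the size $\varepsilon$ of the perturbation, while $\tilde f(g)$ is also of size $\varepsilon$, so the Hensel condition $|\tilde f(g)|<|\tilde f'(g)|^2$ fails.

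What actually makes the $G$-side work, and what the paper uses, is a soft geometric fact rather than a lifting statement for a prescribed $\tilde f$: the regular locus $G_{\mathrm{reg}}$ is the complement of a proper Zariski closed subset of the smooth irreducible variety $G$, hence is dense in $G$ (and a fortiori in $G_{\mathrm{qr}}$) for the $\varpi$-adic topology. One then takes \emph{any} regular $h$ near $g$ without trying to hit a prescribed $\tilde f$, sets $\tilde f:=f_h$, and produces a matching $h'$ near $g'$ on the $\iGL_n$-side — which is unproblematic, either by your companion-block argument when the factorization shape is preserved, or more robustly because the characteristic polynomial map on $\iGL_n(K)$ is a submersion at quasi-regular points and hence open there. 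If you prefer to keep your "prescribe $\tilde f$ first" order of argument, you must replace the Hensel step by the analogous openness/submersivity statement for the reduced characteristic polynomial map on $G=A^\times$ at quasi-regular points; that, not Hensel inside $K[g]$, is what guarantees an $h\in G$ near $g$ with $f_h=\tilde f$.
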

\begin{proof}
The complement of $G^{\mathrm{reg}}$ in $G$ is a Zariski closed subset of
$G$ with strictly smaller dimension than $G$. 
This implies that $G^{\mathrm{reg}}$ is dense in $G$ 
in the $\varpi$-adic topology, 
hence it is dense also in $G^{\mathrm{qr}}$.
Therefore, the claim follows from 
Theorem \ref{LJLCthm}, 
since $\tr \pi$ and $\tr \mathrm{JL}_{D,m}(\pi)$ 
for $\pi \in \mathcal{A}^{\square}_{D,m}$ 
are locally constant functions on 
$G_{\mathrm{qr}}$ and 
$G'_{\mathrm{qr}}$ respectively. 
\begin{comment}
The set of regular elements is 
dense in the set of quasi-regular elements, 
since any inseparable polynomial over $K$ 
can be 
$\mathfrak{p}_K$-adically approximated 
arbitrarily by a separable polynomial over $K$. 
\end{comment}
\end{proof}

\begin{lemma}\label{fou}
The function 
\[
 G_{n,\chi,\psi} \colon k \to \mathbb{C};\ 
 a \mapsto 
 G_n (\chi,\psi,a). 
\]
is not equal to 
the zero function on $k$. 
\end{lemma}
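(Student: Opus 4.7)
The plan is to argue by contradiction using additive Fourier inversion on the finite field $k$. Suppose for contradiction that $G_{n,\chi,\psi}(a) = 0$ for every $a \in k$. Then for any $b \in k$ one has
\[
 \sum_{a \in k} G_{n,\chi,\psi}(a)\, \overline{\psi(ab)}
 = \sum_{x \in \mu_{n_q}(k)} \chi(x) \sum_{a \in k} \psi\bigl( a(x - b) \bigr) = 0.
\]
By orthogonality of the nontrivial additive character $\psi$ of $(k,+)$, the inner sum equals $q$ if $x = b$ and $0$ otherwise. Hence the left-hand side equals $q\chi(b)$ when $b \in \mu_{n_q}(k)$ and $0$ otherwise. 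Taking $b = 1 \in \mu_{n_q}(k)$ gives $0 = q\chi(1) = q$, which is a contradiction.

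Equivalently, one can phrase the argument Fourier-theoretically: up to a sign in the argument of $\psi$, the function $G_{n,\chi,\psi}$ is the discrete additive Fourier transform on $(k,+)$ of the function on $k$ that equals $\chi$ on $\mu_{n_q}(k)$ and vanishes elsewhere. The latter function is visibly nonzero since $\chi$ takes values in $\bC^\times$, so injectivity of the Fourier transform on the finite abelian group $(k,+)$ forces $G_{n,\chi,\psi}$ not to be identically zero. There is no real obstacle here; the lemma amounts to the non-degeneracy of the additive Fourier transform restricted to a nonempty subset of $k$, and the whole argument is a one-line orthogonality computation.
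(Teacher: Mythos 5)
Your proof is correct and follows essentially the same approach as the paper, which identifies $G_{n,\chi,\psi}$ as the additive Fourier transform of the nonzero function that equals $\chi$ on $\mu_{n_q}(k)$ and vanishes elsewhere, then invokes Fourier inversion. Your first paragraph simply unwinds that inversion into an explicit orthogonality computation, and your second paragraph matches the paper's proof verbatim in spirit.
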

\begin{proof}
Let $F_{n,\chi}$ be the function on $k$ defined by 
\[
 F_{n,\chi}(x)=
 \begin{cases}
 \chi(x) & \textrm{if}\ x \in \mu_{n_q}(k),\\ 
 0 & \textrm{if}\ x \notin \mu_{n_q}(k).
 \end{cases} 
\]
Then 
$G_{n,\chi,\psi}$ is regarded as 
the Fourier transformation of $F_{n,\chi}$ with respect to $\psi$. 
Hence, the claim follows from the Fourier inversion formula. 
\end{proof}

\begin{lemma}\label{cc}
For any $a' \in k^{\times} \setminus \{ 1 \}$, 
there is $a \in k^{\times}$ such that 
$K_{n,a}(\psi) \neq K_{n,a a'}(\psi)$. 
\end{lemma}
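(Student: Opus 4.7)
The plan is to prove the contrapositive by Fourier analysis on the finite group $k^{\times}$: if $a'$ acts trivially on all the values $K_{n,a}(\psi)$, then $a' = 1$. The natural tool is to expand the generalized Kloosterman sum in the basis of multiplicative characters of $k^{\times}$.

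First I would compute the Fourier coefficients of the function $a \mapsto K_{n,a}(\psi)$ on $k^{\times}$. For any $\chi \in (k^{\times})^{\vee}$,
\[
 \sum_{a \in k^{\times}} K_{n,a}(\psi) \chi(a)^{-1}
 = \sum_{\zeta_1, \ldots, \zeta_n \in k^{\times}}
 \psi\Bigl( \sum_{i=1}^{n} \zeta_i \Bigr)
 \chi(\zeta_1 \cdots \zeta_n)^{-1}
 = G(\chi^{-1}, \psi)^n
\]
by separating the sum into a product over the $\zeta_i$'s. Fourier inversion on $k^{\times}$ then gives
\[
 K_{n,a}(\psi) = \frac{1}{q-1}
 \sum_{\chi \in (k^{\times})^{\vee}}
 G(\chi^{-1}, \psi)^n \chi(a).
\]

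Suppose now, for contradiction, that there exists $a' \in k^{\times} \setminus \{1\}$ with $K_{n,a}(\psi) = K_{n, a a'}(\psi)$ for every $a \in k^{\times}$. Comparing Fourier coefficients on both sides yields
\[
 G(\chi^{-1}, \psi)^n \bigl( 1 - \chi(a') \bigr) = 0
 \qquad \text{for every } \chi \in (k^{\times})^{\vee}.
\]
Since $|G(\chi^{-1}, \psi)| = \sqrt{q}$ for nontrivial $\chi$ and $G(1, \psi) = -1$, the factor $G(\chi^{-1}, \psi)^n$ never vanishes. Therefore $\chi(a') = 1$ for all $\chi \in (k^{\times})^{\vee}$, which forces $a' = 1$, a contradiction, and the lemma follows.

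There is no real obstacle here: the only technical point is the standard fact that Gauss sums over $k^{\times}$ are never zero, which is well known. The rest is a direct Fourier computation on the finite abelian group $k^{\times}$.
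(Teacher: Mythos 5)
Your proof is correct and takes essentially the same approach as the paper: both rest on the identity $\sum_{a}\chi(a)K_{n,a}(\psi)=G(\chi,\psi)^n$ and the nonvanishing of Gauss sums. The paper just tests the difference $K_{n,a}-K_{n,aa'}$ against a single character $\chi'$ with $\chi'(a')\neq 1$, whereas you carry out the full Fourier inversion before reaching the same conclusion.
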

\begin{proof}
Let $a' \in k^{\times} \setminus \{ 1 \}$. 
We take $\chi' \in (k^{\times})^{\vee}$ 
such that 
$\chi' (a' ) \neq 1$. 
Then we have 
\begin{align*}
 \sum_{a \in k^{\times}} \chi' (aa') 
 \bigl( K_{n,a}(\psi) - K_{n,a a'}(\psi) \bigr) 
 &=(\chi' (a') -1) \sum_{a \in k^{\times}} 
 \chi' (a) K_{n,a}(\psi) \\ 
 &= 
 (\chi' (a') -1) G(\chi' ,\psi )^n \neq 0 
\end{align*}
by \cite[(7.1.6)]{DeCE}. 
Hence, the claim follows. 
\end{proof}

\begin{theorem}\label{mainthm}
For $\eta=(\zeta,\chi,c) \in 
\mu_{q-1}(K) \times (k^{\times})^{\vee} \times \mathbb{C}^{\times}$, we have 
$\mathrm{JL}_{D,m}(\pi_{D,m,\eta})=\pi_{K,n,\eta}$. 
\end{theorem}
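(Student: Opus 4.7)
The plan is to verify the character identity of Corollary \ref{jl} at the two families of elliptic quasi-regular elements for which we have the explicit formulas of Section \ref{character}, and to use this to identify each component of the triple $\eta' = (\zeta',\chi',c')$ that describes $\mathrm{JL}_{D,m}(\pi_{D,m,\eta})$ as a simple supercuspidal of $\iGL_n(K)$.

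\textbf{Reduction.} Because the LJLC preserves Godement-Jacquet $\epsilon$-factors, it preserves the conductor. Hence $\mathrm{JL}_{D,m}(\pi_{D,m,\eta})$ is an essentially square-integrable representation of $G' = \iGL_n(K)$ of conductor $n+1$, which one checks forces it to be supercuspidal. The surjectivity of $\Phi$ in Proposition \ref{epiclass}, applied with $m=n$, then provides a triple $\eta' = (\zeta',\chi',c')$ with $\mathrm{JL}_{D,m}(\pi_{D,m,\eta}) = \pi_{K,n,\eta'}$, and the task reduces to showing $\eta' = \eta$.

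\textbf{Step A: $\zeta = \zeta'$.} I would apply Corollary \ref{jl} to the elliptic quasi-regular elements $1+\varphi_{\zeta\lambda} \in G$ of Lemma \ref{qreg1phi} and their counterparts $1+\varphi_{\zeta'\mu} \in G'$ with matching reduced characteristic polynomial $(x-1)^n - \zeta\lambda\varpi$, which forces $\mu = \zeta\lambda/\zeta'$. Substituting Proposition \ref{trKl} on both sides, the signs $(-1)^{n-m}$ cancel and the identity collapses to
\[
K_{n,\bar\lambda}(\psi) = K_{n,\overline{(\zeta/\zeta')\lambda}}(\psi) \qquad \text{for every } \lambda \in \mu_{q-1}(K).
\]
Lemma \ref{cc} then forces $\overline{\zeta/\zeta'} = 1$, i.e.\ $\zeta = \zeta'$.

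\textbf{Step B: $c = c'$ and $\chi|_{\mu_{n_q}(k)} = \chi'|_{\mu_{n_q}(k)}$.} Applying Corollary \ref{jl} to pairs $(g_u, g'_{u'})$ of elliptic quasi-regular elements from Lemma \ref{qregphi} with matching reduced characteristic polynomial, and substituting Proposition \ref{trGa} on both sides, reduces the relation after simplifying signs (one checks $(-1)^{n-m}(-1)^{n-1} = (-1)^{m-1}$) to
\[
c\, G_n(\chi,\psi, \overline{\Trd_{A/K}(u)}) = c'\, G_n(\chi',\psi, \overline{\tr(u')}).
\]
Exploiting the freedom in the fibres of $u \mapsto f_{g_u}(x)$ on each side, one arranges for every $t \in k$ a pair of such elements with $\overline{\Trd_{A/K}(u)} = \overline{\tr(u')} = t$. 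The resulting identity $c\, G_{n,\chi,\psi} = c'\, G_{n,\chi',\psi}$ of functions on $k$, combined with Fourier inversion (the mechanism used in the proof of Lemma \ref{fou}), yields $c\, F_{n,\chi} = c'\, F_{n,\chi'}$ on $k$; evaluating at $1 \in \mu_{n_q}(k)$ gives $c=c'$, and then $\chi|_{\mu_{n_q}(k)} = \chi'|_{\mu_{n_q}(k)}$.

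\textbf{Step C: $\chi = \chi'$.} The central character of $\pi_{D,m,\eta}$ is the restriction of $\theta_{D,m,\eta}$ to $Z(G)=K^{\times}$, which by \eqref{ccd} coincides with $x \mapsto \chi(\bar x)$ on $\mu_{q-1}(K)$; the same holds for $\pi_{K,n,\eta'}$ with $\chi'$. Since the LJLC preserves central characters, the two central characters agree on $\mu_{q-1}(K) \cong k^{\times}$, so $\chi = \chi'$ on the whole of $k^{\times}$, which completes the matching $\eta' = \eta$.

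\textbf{Main obstacle.} The delicate point is the parameter-matching of Step B: for every $t \in k$, one must simultaneously choose $u$ on the $G$ side and $u'$ on the $G'$ side such that $g_u$ and $g'_{u'}$ share a reduced characteristic polynomial while also $\overline{\Trd_{A/K}(u)} = \overline{\tr(u')} = t$. Since neither $\Trd_{A/K}(u)$ nor $\tr(u')$ is read off from the coefficients of $f_{g_u}(x)$ alone (only $\Trd_{A/K}(\varphi_\zeta^2 u)$ appears as a coefficient), this requires a careful comparison of the fibres of $u \mapsto f_{g_u}(x)$ on the two sides. Once this is arranged, the Fourier inversion step and the central-character argument finish the proof.
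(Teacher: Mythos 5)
Your overall route is the paper's: establish the reduction via conservation of conductor and supercuspidality, pin down $\zeta$ by the Kloosterman‐sum character relation together with Lemma \ref{cc}, use the central‐character argument for $\chi$, and use the Gauss‐sum character relation together with Lemma \ref{fou} for $c$. Steps A and C are correct as written and match the paper. But your ``main obstacle'' in Step B is a genuine gap in what you have submitted, and the paper dissolves it in a way you did not find.

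Two remarks. First, the obstacle is self‐inflicted by your ordering: if you run Step C (central characters give $\chi=\chi'$ on all of $k^{\times}$) \emph{before} Step B, then there is no need to match $\overline{\Trd_{A/K}(u)}$ against $\overline{\tr(u')}$ for every $t\in k$ and then invoke Fourier inversion; a \emph{single} test point $a$ with $G_n(\chi,\psi,a)\neq 0$ suffices (Lemma \ref{fou}), and the Gauss‐sum character relation at one corresponding pair immediately yields $c=c'$. That is the structure of the paper's proof.

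Second, and more fundamentally, your stated worry that ``neither $\Trd_{A/K}(u)$ nor $\tr(u')$ is read off from the coefficients of $f_{g_u}(x)$ alone'' is mistaken. It is true that the coefficient of $x^{n-1}$ involves $\Trd_{A/K}(\varphi_\zeta^2 u)$ rather than $\Trd_{A/K}(u)$, but $\overline{\Trd_{A/K}(u)}$ is nonetheless determined by $f_{g_u}(x)$: from $g_u=\varphi_\zeta(1+\varphi_\zeta u)$ one gets $\Trd_{A/K}(g_u^{-1})\equiv-\Trd_{A/K}(u)\bmod\mathfrak{p}_K$, and $\Trd_{A/K}(g_u^{-1})=-a_1(u)/a_0(u)$ is a rational function of the coefficients of $f_{g_u}$. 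The paper makes this concrete: given $u_\xi=\mathrm{diag}(\xi,0,\dots,0)$ on the $G$ side, it builds the explicit companion‐type element $g_\alpha\in G'$ with the same reduced characteristic polynomial, and verifies
\[
\overline{\tr(u_\alpha)}=\overline{\alpha_1}
=\overline{(-1)^n\,\frac{\Trd_{A/K}(g_\xi^{-1})\Nrd_{A/K}(g_\xi)}{\zeta\varpi}}
=\overline{\Trd_{D/K}(\xi)}=a,
\]
so the two ``trace'' parameters automatically agree whenever the reduced characteristic polynomials do. Once you reorder the steps and make this observation, your Step B closes and your argument becomes the paper's.
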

\begin{proof}
Let $\mathrm{LJ}_{D,m}$ denote the inverse of 
$\mathrm{JL}_{D,m}$. 
The map $\mathrm{LJ}_{D,m}$ preserves 
conductors and supercuspidality by \cite[Theorem 2.1(e), (i)]{ABPS}. 
Let $\eta'=(\zeta',\chi',c') \in \mu_{q-1}(K) \times (k^{\times})^{\vee} \times \mathbb{C}^{\times}$. 
By Proposition \ref{epiclass}, 
there exists $\eta=(\zeta,\chi,c) \in 
 \mu_{q-1}(K) \times (k^{\times})^{\vee} \times \mathbb{C}^{\times}$ 
such that 
$\mathrm{LJ}_{D,m}(\pi_{K,n,\eta'})=\pi_{D,m,\eta}$. 

\begin{comment}
We set $\pi=\mathrm{JL}_{D,m}(\pi_{D,m,\eta})$. 
First, we show that 
$\pi$ is supercuspidal. 
Let $\delta(\pi_{D,m,\eta})$ denote 
the parametric degree of $\pi_{D,m,\eta}$
in \cite[2.7 Definition]{BHetJL}. 
Then $\delta(\pi_{D,m,\eta})$ is 
divisible by $n=[L_{\zeta}:K]$ by definition.  
We have 
$\delta(\pi_{D,m,\eta}) \mid n$ by \cite[(2.7.3)]{BHetJL}. Hence, 
we have $\delta(\pi_{D,m,\eta})=n$. 
Since $\mathrm{JL}_{D,m}$ preserves 
parametric degrees by \cite[2.8 Corollary 1]{BHetJL}, 
the parametric degree of 
$\pi$ equals $n$.
Then, we know that $\pi$ is supercuspidal 
by \cite[1.1 Lemma]{BHpara}. 
Since $\mathrm{JL}_{D,m}$ 
preserves conductors, 
$\pi$ is simple supercuspidal. 
Hence, by Proposition \ref{epiclass}, we can take 
$\eta'=(\zeta',\chi',c') \in 
 \mu_{q-1}(K) \times (k^{\times})^{\vee} \times \mathbb{C}^{\times}$ 
such that 
$\pi=\pi_{K,n,\eta'}$. 
\end{comment}

We have to show $\eta=\eta'$. 
We have $\chi=\chi'$, 
because $\mathrm{JL}_{D,m}$ preserves central characters.  
We write $\mathfrak{A}'$ and 
$\varphi'_{\zeta}$ for $\mathfrak{A}$ and 
$\varphi_{\zeta}$ respectively in the case $m=n$. 
By Proposition \ref{trKl} and Corollary \ref{jl}, 
we have 
\[
 K_{n,\bar{\lambda}} (\psi)= 
 (-1)^{n-m} \tr \pi_{D,m,\eta} (1+\varphi_{\zeta \lambda})
 = \tr \pi_{K,n,\eta'} (1+\varphi'_{\zeta \lambda})
 = K_{n,\overline{\lambda \zeta \zeta'^{-1}}} (\psi) 
\]
for $\lambda \in \mu_{q-1} (K)$. 
Hence, we have $\zeta=\zeta'$ 
by Lemma \ref{cc}. 

Finally, we show $c=c'$. 
By Lemma \ref{fou}, 
we can take $a \in k$
such that $G_n(\chi,\psi, a) \neq 0$. 
%Since $\Tr_{k_r/k}$ is surjective,  
We take $\xi_0 \in k_r$ 
such that $\Tr_{k_r/k}(\xi_0)=a$. 
Let $\xi \in \mathcal{O}_D$ be a lifting of
$\xi_0$.
We set 
\[
u_{\xi}=\mathrm{diag}(\xi,0,\cdots,0) \in \mathfrak{A}. 
\]
We simply write $g_{\xi}$ for 
$g_{u_{\xi}}$. 
Let $\{a_i(u_{\xi})\}_{0 \leq i \leq n-1}$ be as in Lemma \ref{qregphi}.  
By Lemma \ref{qregphi}, the elements 
\[
 \alpha_i=-\frac{a_i(u_{\xi})}{\zeta \varpi} \quad 
 \textrm{for $1 \leq i \leq n-1$}, \quad 
 \alpha_n=-\frac{\frac{a_0(u_{\xi})}{\zeta\varpi}+1}{\zeta \varpi} 
\]
are contained in $\mathcal{O}_K$. 
We set 
\begin{align*}
 u_{\alpha}=\sum_{i=1}^n 
 \varphi'^{i-1}_{\zeta} \mathrm{diag}(\alpha_i,0,\cdots,0) \in 
 \mathfrak{A}', \quad 
 g_{\alpha}=\varphi'_{\zeta}(1+\varphi'_{\zeta} u_{\alpha}) \in G'. 
\end{align*}
Then we have 
\begin{equation*}%\label{aaa}
 f_{g_{\alpha}}(x)=x^n-\sum_{i=1}^{n-1}
 \zeta \varpi \alpha_i x^i-\zeta \varpi(1+\alpha_n \zeta \varpi) 
 =f_{g_{\xi}}(x). 
\end{equation*}
Hence, 
$g_{\xi} \in G_{\mathrm{qr}}$ and 
$g_{\alpha} \in G'_{\mathrm{qr}}$ correspond. 
We have $\overline{\tr (u_{\alpha})}
=\overline{\Trd_{A/K}(u_{\xi})}=a$, 
since 
\[
 \tr (u_{\alpha}) \equiv 
 \alpha_1 = (-1)^n 
 \frac{\Trd_{A/K} (g_{\xi}^{-1}) \Nrd_{A/K} (g_{\xi})}{\zeta \varpi} 
 \equiv 
 \Trd_{D/K} (\xi) \mod \fp_K. 
\]
Therefore, 
we have 
\begin{align*}
 c =(-1)^{m-1} \frac{\tr \pi_{D,m,\eta}(g_{\xi})}{G_n(\chi,\psi,a)}= 
 (-1)^{n-1} \frac{\tr \pi_{K,n,\eta'}(g_{\alpha})}{G_n(\chi,\psi,a)} 
 =c' 
\end{align*}
by Proposition \ref{trGa} and Corollary \ref{jl}. 
\end{proof}

%\begin{remark}It will be possible to show Theorem \ref{mainthm} by calculating epsilon factors of character twists of simple supercuspidal representations. However, we prefer an approach by the character relation, since it is a standard characterization of the LJLC. \end{remark}

\section{Another proof of Theorem \ref{mainthm}}\label{An} 
Let $\eta=(\zeta,\chi,c) \in \mu_{q-1}(K)
\times (k^{\times})^{\vee} \times 
\mathbb{C}^{\times}$. 
%Let $\epsilon(\pi_{D,m,\eta},s,\psi_K)$ denote the Godement-Jacquet local constant of $\pi_{D,m,\eta}$ in \cite{GJzf}. 
We simply write 
$\epsilon (\pi_{D,m,\eta},\psi_K)$
for the value of $\epsilon(\pi_{D,m,\eta},s,\psi_K)$
 at $1/2$.
 For any smooth character $\xi \colon 
 K^{\times} \to \mathbb{C}^{\times}$, 
 let $\xi_A$ denote the composite $\xi \circ 
 \Nrd_{A/K}$.

The following lemma is a special case of a formula which 
represents Godement-Jacquet local constants 
with respect to 
non-abelian congruence Gauss sums. 
Such a formula is studied in \cite{B} and \cite{BFgauss}. 
If $m=n$, this lemma is just \cite[2.2 Lemma]{BHepi}. 
\begin{lemma}\label{ess}
{\rm 1}.\ We have 
 \[
 \epsilon(\pi_{D,m,\eta},\psi_K)=(-1)^{n-1} c. 
 \]
 {\rm 2}.\ For any tamely ramified character 
$\xi$ of $K^{\times}$, we have 
\[
\epsilon(\pi_{D,m,\eta} \otimes \xi_A,\psi_K)
=\xi \bigl( (-1)^{n-1}\zeta \varpi \bigr) 
\epsilon (\pi_{D,m,\eta},\psi_K). 
\]
\end{lemma}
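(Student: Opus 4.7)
The plan is to invoke the Bushnell--Fr\"ohlich formula \cite{BFgauss}, extended to central simple algebras in \cite{B}, that computes the Godement--Jacquet local constant of a supercuspidal representation of $G$ directly from its simple-type datum. For Part~1 the relevant input is the maximal simple type $(\mathrm{J},\lambda) = \bigl(\mathcal{O}_{L_\zeta}^\times U_{\mathfrak{A}}^1, \theta_{D,m,\eta}|_{\mathcal{O}_{L_\zeta}^\times U_{\mathfrak{A}}^1}\bigr)$ attached to the minimal simple stratum $[\mathfrak{A},1,0,\varphi_\zeta^{-1}]$ identified in the proof of Lemma~\ref{piepi}, together with its extension $\theta_{D,m,\eta}$ to $\bar{\mathrm{J}} = L_\zeta^\times U_{\mathfrak{A}}^1$. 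The split case $m=n$ is handled by \cite[2.2 Lemma]{BHepi}, and will serve as both a sanity check and the template for the sign bookkeeping.

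First I would specialize the Bushnell--Fr\"ohlich formula to our depth-one, minimal situation. Because the underlying simple pair $[0,\varphi_\zeta^{-1}]$ is minimal in the sense of \cite[2.3.3]{SecGLmD1} and has depth one, the ``non-abelian Gauss sum'' piece of the formula degenerates to a trivial contribution, and the only substantive input is the value of the extending character at the uniformizer $\varphi_\zeta$. By \eqref{ccd} this is $(-1)^{m-1}c$. A careful tracking of signs, using \eqref{Nrd} and the fact that the reduced characteristic polynomial of $\varphi_\zeta$ has degree $n$, produces an extra factor of $(-1)^{n-m}$, converting $(-1)^{m-1}c$ into $(-1)^{n-1}c$ as required.

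For Part~2 the plan is to reduce to Part~1 by observing that the tame twist $\pi_{D,m,\eta} \otimes \xi_A$ is itself a simple supercuspidal representation. Tameness of $\xi$ gives $\xi|_{U_K^1}=1$, whence $\xi_A|_{U_{\mathfrak{A}}^1}=1$ because $\Nrd_{A/K}(U_{\mathfrak{A}}^1) \subset U_K^1$; on the other hand \eqref{Nrd} yields $\xi_A(\varphi_\zeta) = \xi\bigl((-1)^{n-1}\zeta\varpi\bigr)$. Inspecting the defining character \eqref{ccd} of $\pi_{D,m,\eta} \otimes \xi_A$ and applying Proposition~\ref{epiclass}, one identifies $\pi_{D,m,\eta} \otimes \xi_A \simeq \pi_{D,m,\eta''}$ for a triple $\eta''=(\zeta,\chi'',c'')$ with $c'' = c\cdot\xi\bigl((-1)^{n-1}\zeta\varpi\bigr)$; applying Part~1 to $\eta$ and to $\eta''$ yields Part~2. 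The main obstacle will be accurately specializing the Bushnell--Fr\"ohlich formula for general $m$ and $D$ to this depth-one stratum and verifying that the tame Gauss-sum contribution is indeed trivial and that the overall sign lands on $(-1)^{n-1}$ rather than $(-1)^{m-1}$ or $(-1)^{n-m}$; once Part~1 is secured, Part~2 is essentially formal.
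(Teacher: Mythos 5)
Your Part~1 follows essentially the paper's route: apply the Bushnell--Fr\"ohlich relation between the Godement--Jacquet $\epsilon$-factor and the non-abelian congruence Gauss sum (\cite[Theorem~(3.3.8)(iv)]{BFgauss}), and then evaluate the Gauss sum explicitly for the depth-one minimal stratum via \cite[(2.7.4)]{BFgauss}. One phrasing to sharpen: the Gauss sum does not ``degenerate to a trivial contribution'' --- it evaluates to $\mathrm{N}\mathfrak{f}(\rho)^{1/2}\,(\theta_{D,m,\eta}\otimes\xi_A)^{\vee}(\varphi_{\zeta}^{-1})\,\psi_A(\varphi_{\zeta}^{-1})$, and it is only after the $\mathrm{N}\mathfrak{f}(\rho)^{\pm1/2}$ cancel against the $\epsilon$-factor formula and one observes $\psi_A(\varphi_{\zeta}^{-1})=\psi_K(\Trd_{A/K}(\varphi_{\zeta}^{-1}))=1$ from \eqref{Nrd} that the character value at $\varphi_\zeta$ is all that survives, with the $(-1)^{n-m}$ from the $\epsilon$/Gauss-sum formula combining with $(-1)^{m-1}$ from \eqref{ccd} to give $(-1)^{n-1}$. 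You also leave implicit the change of additive character from the standard $\psi_K^0$ to $\psi_K$, which the paper handles carefully via the parallel formulas \eqref{acx} whose net contribution cancels, and the reduction to characteristic zero before invoking the equal-characteristic remark of \cite{B}.

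Your Part~2 is a genuinely different and in fact cleaner route. The paper proves both parts at once by carrying the tame twist $\xi_A$ through the entire Bushnell--Fr\"ohlich computation, working with $\rho=\mathrm{Ind}_{L_{\zeta}^{\times}U_{\mathfrak{A}}^1}^{L_{\zeta}^{\times}\mathfrak{A}^{\times}}(\theta_{D,m,\eta}\otimes\xi_A)$ throughout and extracting Part~1 as the $\xi=1$ case. You instead observe that $\theta_{D,m,\eta}\cdot\xi_A|_{L_{\zeta}^{\times}U_{\mathfrak{A}}^1}=\theta_{D,m,\eta''}$ with $\eta''=(\zeta,\chi'',c'')$, $c''=c\,\xi\bigl((-1)^{n-1}\zeta\varpi\bigr)$ (using $\xi_A|_{U_{\mathfrak{A}}^1}=1$ and \eqref{Nrd}), so $\pi_{D,m,\eta}\otimes\xi_A\simeq\pi_{D,m,\eta''}$, and Part~2 drops out of Part~1 applied to $\eta$ and $\eta''$. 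This formal reduction avoids redoing the Gauss-sum computation with the twist present and makes the structure of the statement more transparent; the content is the same, but the organization is tidier.
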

\begin{proof}
Let $\xi$ be a tamely ramified character of $K^{\times}$. 
Assume that the characteristic of $K$ 
equals zero. 
We set 
\[
\rho=\mathrm{Ind}_{L_{\zeta}^{\times} U_{\mathfrak{A}}^1}^{L_{\zeta}^{\times}\mathfrak{A}^{\times}} (\theta_{D,m,\eta} \otimes \xi_A). 
\]
We see that $\rho$ is an irreducible admissible 
representation of $L_{\zeta}^{\times}\mathfrak{A}^{\times}$ by Lemma \ref{piepi}, and non-degenerate 
in the sense of \cite[p.\ 228]{BFgauss} by \eqref{ccd}. 

Let $\psi_K^0$ be the standard continuous additive character of $K$ 
defined in \cite[2.1]{BFgauss}. 
In \cite[(2.3.6)]{BFgauss}, the non-abelian congruence 
Gauss sum $\tau(\rho)$ is defined for $\rho$ and 
$\psi_K^0$. 
However, it can be defined for any non-trivial additive character 
$\psi_K'$ of $K$ as in \cite[(3.4)]{B} 
(cf.~\cite[Remarks after Theorem (3.7)]{B}), for which 
we write $\tau(\rho,\psi_K')$. 

We take an element $a \in K^{\times}$
such that $\psi_K(x)=\psi_K^0(ax)$ for $x \in K$. 
Let $\omega_{\rho^{\vee}}$ denote the central 
character of $\rho^{\vee}$. 
By \cite[(3.4)]{B} and \cite[(3.3.5)]{GJzf}, we have formulas 
\begin{gather}\label{acx}
\begin{aligned}
 \tau(\rho^{\vee},\psi_K)&=\omega_{\rho^{\vee}}(a^{-1})
 \tau(\rho^{\vee},\psi_K^0 )=(\theta_{D,m,\eta} \otimes \xi_A)(a) 
 \tau(\rho^{\vee},\psi_K^0 ), \\ 
 \epsilon(\pi_{D,m,\eta}\otimes \xi_A,\psi_K )&= 
 (\theta_{D,m,\eta} \otimes \xi_A)(a) 
 \epsilon(\pi_{D,m,\eta}\otimes \xi_A,\psi_K^0) 
\end{aligned}
\end{gather}
respectively. 
By these formulas and \cite[Theorem (3.3.8)(iv)]{BFgauss}, 
we obtain 
\begin{equation}\label{f1f}
 \epsilon(\pi_{D,m,\eta} \otimes \xi_A,\psi_K)=
 (-1)^{n-m}\tau(\rho^{\vee},\psi_K)\mathrm{N}\mathfrak{f}(\rho)^{-1/2}. 
\end{equation}
We put $\psi_A =\psi_K \circ \Trd_{A/K}$. Then we have 
\begin{equation}\label{tauN}
\tau(\rho^{\vee},\psi_K)=
\mathrm{N}\mathfrak{f}(\rho)^{1/2}
(\theta_{D,m,\eta} \otimes \xi_A)^{\vee}(\varphi_{\zeta}^{-1}) \psi_A(\varphi_{\zeta}^{-1}) 
\end{equation}
by \eqref{acx} and \cite[(2.7.4)]{BFgauss}. 
We obtain 
\begin{align*}
\epsilon(\pi_{D,m,\eta} \otimes \xi_A,\psi_K)
&=(-1)^{n-m} 
(\theta_{D,m,\eta} \otimes \xi_A)^{\vee}(\varphi_{\zeta}^{-1}) \psi_A(\varphi_{\zeta}^{-1}) \\
&=(-1)^{n-1} 
\xi \bigl( (-1)^{n-1}\zeta \varpi \bigr) c 
\end{align*}
by \eqref{Nrd}, \eqref{ccd}, \eqref{f1f} and \eqref{tauN}. 
The same arguments work 
also in the equal characteristic case 
(cf.~\cite[Remarks after Theorem (3.7)]{B}). 
Hence, we obtain the claims.  
\end{proof}

\begin{proof}[Another proof of Theorem \ref{mainthm}]
Let $\eta'=(\zeta',\chi',c') \in \mu_{q-1}(K) \times (k^{\times})^{\vee} \times \mathbb{C}^{\times}$. 
As in the first proof of Theorem \ref{mainthm}, 
we see that there exists a triple 
$\eta=(\zeta,\chi',c) \in \mu_{q-1}(K) \times (k^{\times})^{\vee} \times \mathbb{C}^{\times}$ such that $\pi_{D,m,\eta}
\simeq \mathrm{LJ}_{D,m}(\pi_{K,n,\eta'})$. 
We prove $\zeta=\zeta'$. 
Let $\xi$ be any tamely ramified character 
of $K^{\times}$. 
We write $A'$ for $A$ in the case $m=n$. 
Since the 
$\mathrm{JL}_{D,m}$ preserves character twists,  
we have 
\begin{gather}\label{bfg0}
\begin{aligned}
\epsilon (\pi_{D,m,\eta} \otimes \xi_A,\psi_K)=
\epsilon (\pi_{K,n,\eta'} \otimes \xi_{A'},\psi_K)
\end{aligned}
\end{gather}
by \cite[Theorem 2.2(i)]{ABPS} (cf.~\cite[B.j.1]{DKV}). 
We have $\epsilon(\pi_{K,n,\eta'},\psi_K) \neq 0$
by Lemma \ref{ess}.1.  
By Lemma \ref{ess}.2 and \eqref{bfg0}, 
we have 
\begin{align*}
 \xi\bigl( (-1)^{n-1}\zeta \varpi \bigr) 
 =\frac{\epsilon (\pi_{D,m,\eta} \otimes \xi_A,\psi_K)}{\epsilon (\pi_{D,m,\eta},\psi_K)} 
 =\frac{\epsilon (\pi_{K,n,\eta'} \otimes \xi_{A'},\psi_K)}{\epsilon 
(\pi_{K,n,\eta'},\psi_K)} 
 =\xi \bigl( (-1)^{n-1}\zeta' \varpi \bigr). 
\end{align*}
Hence, we obtain $\xi(\zeta \zeta'^{-1})=1$. 
This implies $\zeta=\zeta'$. 
By Lemma \ref{ess}.1 and \eqref{bfg0}, 
we have  
 \[
c= (-1)^{n-1} \epsilon (\pi_{D,m,\eta},\psi_K)=
(-1)^{n-1}\epsilon (\pi_{K,n,\eta'},\psi_K)
= c'. 
\]
Therefore we have $\eta=\eta'$. 
\end{proof}

\section{Invariance of endo-class}\label{Invec}
We show that endo-classes 
for the simple supercuspidal representations 
are invariant under the LJLC. 

Let 
$\cE (K)$ be the 
set of endo-classes of ps-characters over $K$ 
in the sense of 
\cite[Definition 1.10]{BSS5}. 
Then we have a map 
\[
 \Theta_G \colon 
 \cA^{\square}_{D,m} \to \cE (K) 
\]
constructed in \cite[9.3]{BSS5}. 
Broussous-S\'{e}cherre-Stevens conjecture that 
\[
 \Theta_{G} (\pi) = 
 \Theta_{G'} \bigl( 
 \mathrm{JL}_{D,m} (\pi) \bigr) 
\]
for $\pi \in \cA^{\square}_{D,m}$ 
in \cite[Conjecture 9.5]{BSS5}. 
The following proposition verifies the conjecture 
for the simple supercuspidal representations. 

\begin{proposition}
For any simple supercuspidal representation $\pi$ of $G$, 
we have 
\[
 \Theta_{G} (\pi) = 
 \Theta_{G'} \bigl( 
 \mathrm{JL}_{D,m} (\pi) \bigr). 
\]
\end{proposition}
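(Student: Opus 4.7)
The plan is to combine Theorem \ref{mainthm} with an explicit identification of the simple characters underlying the two simple supercuspidal types, and to observe that these are transfers of a common simple pair over $K$. By Proposition \ref{epiclass}, any simple supercuspidal $\pi$ of $G$ is isomorphic to $\pi_{D,m,\eta}$ for some $\eta=(\zeta,\chi,c)$, and by Theorem \ref{mainthm} we have $\mathrm{JL}_{D,m}(\pi_{D,m,\eta})=\pi_{K,n,\eta}$. It therefore suffices to prove
\[
 \Theta_G(\pi_{D,m,\eta})=\Theta_{G'}(\pi_{K,n,\eta})
\]
for each such $\eta$.

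Writing $\mathfrak{A}'$ and $\varphi'_\zeta$ for $\mathfrak{A}$ and $\varphi_\zeta$ in the split case $m=n$, the proof of Lemma \ref{piepi} exhibits a maximal simple type for $\pi_{D,m,\eta}$ (resp.\ $\pi_{K,n,\eta}$) attached to the minimal simple stratum $[\mathfrak{A},1,0,\varphi_\zeta^{-1}]$ in $A$ (resp.\ $[\mathfrak{A}',1,0,(\varphi'_\zeta)^{-1}]$ in $A'$). By \eqref{mato} the unique associated simple character is $\theta_{D,m,\eta}|_{U^1_{\mathfrak{A}}}$ (resp.\ $\theta_{K,n,\eta}|_{U^1_{\mathfrak{A}'}}$), given on $1+y$ by the common formula $\psi_K(\Trd(\varphi^{-1}y))$. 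By the construction of $\Theta_G$ in \cite[9.3]{BSS5}, it remains to show that these two simple characters determine the same endo-class of ps-characters.

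Both $\varphi_\zeta^{-1}$ and $(\varphi'_\zeta)^{-1}$ are minimal over $K$ in the sense of \cite[2.3.3]{SecGLmD1} and share the minimal polynomial $x^n-(\zeta\varpi)^{-1}$, so they generate $K$-isomorphic copies of the totally ramified extension $L_\zeta/K$ inside $A$ and $A'$. Under these embeddings of $L_\zeta$, both reduced traces restrict to $\mathrm{Tr}_{L_\zeta/K}$, so the defining formula \eqref{ccd} is literally identical in the two cases once one identifies the two copies of $L_\zeta$. Consequently the two simple characters are transfers of one another under the canonical bijection between $\mathscr{C}(\varphi_\zeta^{-1},0,\mathfrak{A})$ and $\mathscr{C}((\varphi'_\zeta)^{-1},0,\mathfrak{A}')$ attached to the simple pair $[0,\varphi_\zeta^{-1}]$ over $K$, so they define the same ps-character and hence the same endo-class in $\cE(K)$.

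The main obstacle is to verify this last compatibility rigorously: one must check that the transfer map from \cite{SecGLmD1} (or its extension in \cite{BSS5}) applied to our level-one minimal simple characters really sends $\theta_{D,m,\eta}|_{U^1_{\mathfrak{A}}}$ to $\theta_{K,n,\eta}|_{U^1_{\mathfrak{A}'}}$. Because both strata are minimal of level one, the sets of simple characters are singletons and this reduces to the intrinsic description via the reduced trace, so no intertwining ambiguity should arise.
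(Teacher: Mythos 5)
Your proposal is correct and follows essentially the same route as the paper: reduce via Proposition \ref{epiclass} and Theorem \ref{mainthm} to comparing $\pi_{D,m,\eta}$ and $\pi_{K,n,\eta}$, identify the endo-classes via the minimal strata $[\mathfrak{A},1,0,\varphi_\zeta^{-1}]$ and $[\mathfrak{A}',1,0,\varphi_\zeta'^{-1}]$, and show the two simple characters correspond under the transfer maps of S\'echerre, using that each $\mathscr{C}(\varphi^{-1},0,\mathfrak{A})$ is a singleton by \eqref{mato}. The only cosmetic difference is that the paper routes both transfers through the abstract simple-character set $\mathscr{C}_K(\varphi_{\zeta,0}^{-1},0)$ attached to the simple pair over $K$, whereas you compose them directly; the content is identical.
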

\begin{proof}
Let $\pi$ be a simple supercuspidal representation of $G$. 
We take 
$\eta=(\zeta,\chi,c) \in 
 \mu_{q-1}(K) \times (k^{\times})^{\vee} \times \mathbb{C}^{\times}$ 
such that 
$\pi =\pi_{D,m,\eta}$. 
Then 
$\Theta_{G} (\pi_{D,m,\eta})$ is the endo-class 
of the ps-character defined by 
\[
 \bigl( [\mathfrak{A},1,0,\varphi_{\zeta}^{-1} ],
 \theta_{D,m,\eta}|_{U^1_{\mathfrak{A}}} \bigr). 
\]
We have 
$\mathrm{JL}_{D,m} (\pi_{D,m,\eta}) =\pi_{K,n,\eta}$ 
by Theorem \ref{mainthm}. 
We define 
$\mathfrak{A}'$ and $\varphi_{\zeta}'$ 
for $M_n (K)$ similarly as 
$\mathfrak{A}$ and $\varphi_{\zeta}$ for $M_m (D)$. 
Then $\Theta_{G} (\pi_{K,n,\eta})$ is the endo-class 
of the ps-character defined by 
\[
 \bigl( [\mathfrak{A}',1,0,\varphi_{\zeta}'^{-1} ],
 \theta_{K,n,\eta}|_{U^1_{\mathfrak{A}'}} \bigr). 
\]
Let $L_{\zeta,0}$ be a finite extension of $K$ 
generated by an element $\varphi_{\zeta,0}$ such that 
$\varphi_{\zeta,0}^n =\zeta \varpi$. 
Let 
$\mathscr{C}_K (\varphi_{\zeta,0}^{-1},0)$ 
be the set of simple characters corresponding to 
the simple pair $[\varphi_{\zeta,0}^{-1},0]$ over $K$ 
defined in \cite[p.~371]{SecGLmD1}. 
Then we have bijective transfer maps 
\begin{align*}
 \tau_{\mathfrak{A},0,\varphi_{\zeta,0}^{-1}} &\colon 
 \mathscr{C}_K (\varphi_{\zeta,0}^{-1},0) \to 
 \mathscr{C} (\varphi_{\zeta}^{-1},0,\mathfrak{A} ), \\ 
 \tau_{\mathfrak{A}',0,\varphi_{\zeta,0}^{-1}} &\colon 
 \mathscr{C}_K (\varphi_{\zeta,0}^{-1},0) \to 
 \mathscr{C} (\varphi_{\zeta}'^{-1},0,\mathfrak{A}' ) . 
\end{align*}
by \cite[Th\'{e}or\`{e}me 3.53]{SecGLmD1}. 
We have 
\[
 \tau_{\mathfrak{A},0,\varphi_{\zeta,0}^{-1}}^{-1} 
 \bigl( \theta_{D,m,\eta}|_{U^1_{\mathfrak{A}}} \bigr) 
 =
 \tau_{\mathfrak{A}',0,\varphi_{\zeta,0}^{-1}}^{-1} 
 \bigl( \theta_{K,n,\eta}|_{U^1_{\mathfrak{A}'}} \bigr) 
\] 
by \eqref{mato}. 
Hence, we have the claim. 
\end{proof}

\noindent
Naoki Imai\\ 
Graduate School of Mathematical Sciences, 
The University of Tokyo, 3-8-1 Komaba, Meguro-ku, 
Tokyo, 153-8914, Japan\\ 
naoki@ms.u-tokyo.ac.jp\\ 

\noindent
Takahiro Tsushima\\ 
Department of Mathematics and Informatics, 
Faculty of Science, Chiba University, 
1-33 Yayoi-cho, Inage, Chiba, 263-8522, Japan\\
tsushima@math.s.chiba-u.ac.jp

\end{document}